

\documentclass[11pt,letterpaper]{amsart}
\usepackage[utf8]{inputenc}\usepackage[left=2.7cm,right=2.7cm,top=3.5cm,bottom=3cm]{geometry}
\usepackage{amssymb,latexsym,amsmath,amsthm,amscd}
\usepackage{graphicx}
\usepackage{dsfont}
\usepackage{stmaryrd}
\usepackage[pagebackref=false,colorlinks]{hyperref}
\hypersetup{pdffitwindow=true,linkcolor=blue,citecolor=blue,urlcolor=cyan}
\usepackage{hyperref}
\usepackage[all]{xy}
\usepackage{mathrsfs}
\usepackage{color}
\definecolor{Blue}{rgb}{0.3,0.3,0.9}

\usepackage[T2A,OT1]{fontenc}
\DeclareSymbolFont{cyrillic}{T2A}{cmr}{m}{n}
\DeclareMathSymbol{\Sha}{\mathalpha}{cyrillic}{216}




\vfuzz2pt 
\hfuzz2pt 

\theoremstyle{plain}
\newtheorem{thm}{Theorem}[subsection] 

\usepackage{mathrsfs}

\theoremstyle{definition}

\newtheorem{defn}[thm]{Definition}

\newtheorem{rem}[thm]{Remark}

\theoremstyle{definition}

\theoremstyle{plain}
\newtheorem{prop}[thm]{Proposition}
\theoremstyle{plain}
\newtheorem{lem}[thm]{Lemma}
\theoremstyle{plain}
\newtheorem{cor}[thm]{Corollary}
\newtheorem*{thm-intro}{Theorem}
\newtheorem*{cor-intro}{Corollary}
\newtheorem*{conj-intro}{Conjecture}

\theoremstyle{plain}
\newtheorem{conjintro}{Conjecture}

\newtheorem{thmintro}{Theorem}

\theoremstyle{plain}

\numberwithin{equation}{section}


\newcommand{\cc}{\mathbf{c}}

\newcommand{\rH}{{\mathrm{H}}}

\newcommand{\lm}{\lambda}

\newcommand{\pp}{\mathfrak{p}}
\newcommand{\ppbar}{\overline{\mathfrak{p}}}

\newcommand{\fa}{\mathfrak{a}}
\newcommand{\bQ}{\mathbf{Q}}
\newcommand{\bZ}{\mathbf{Z}}
\newcommand{\bC}{\mathbf{C}}



\def\cL{{\mathcal L}}


\def\scrO{\mathscr O}

\def\cW{{\mathcal W}}

\def\cO{\mathcal O}

\newcommand{\VQdag}{{\mathbf{V}_{\underline{Q}}^\dagger}}
\newcommand{\Vdag}{{\mathbf{V}^\dagger}}
\newcommand{\Adag}{{\mathbf{A}^\dagger}}

\newcommand{\Vdagsp}{{\mathbb{V}_{\varphi\boldsymbol{g}\boldsymbol{h}}^\dagger}}
\newcommand{\Adagsp}{{\mathbb{A}_{\varphi\boldsymbol{g}\boldsymbol{h}}^\dagger}}

\newcommand{\Vdags}{{\mathbb{V}_{\varphi\underline{\boldsymbol{g}\boldsymbol{h}}}^\dagger}}
\newcommand{\Adags}{{\mathbb{A}_{\varphi\underline{\boldsymbol{g}\boldsymbol{h}}}^\dagger}}

\newcommand{\unb}{{\varphi}}

\newcommand{\Q}{\mathbf{Q}}
\newcommand{\Z}{\mathbf{Z}}


\def\makeop#1{\expandafter\def\csname#1\endcsname
	{\mathop{\rm #1}\nolimits}\ignorespaces}
\makeop{Hom}   \makeop{End}   \makeop{Aut}   
\makeop{Pic} \makeop{Gal}       \makeop{Div} \makeop{Lie}
\makeop{PGL}   \makeop{Corr} \makeop{PSL} \makeop{sgn} \makeop{Spf}
\makeop{Tr} \makeop{Nr} \makeop{Fr} \makeop{disc}
\makeop{Proj} \makeop{supp} \makeop{ker}   \makeop{Im} \makeop{dom}
\makeop{coker} \makeop{Stab} \makeop{SO} \makeop{SL} \makeop{SL}
\makeop{Cl}    \makeop{cond} \makeop{Br} \makeop{inv} \makeop{rank}
\makeop{id}    \makeop{Fil} \makeop{Frac}  \makeop{GL} \makeop{SU}
\makeop{Trd}   \makeop{Sp} \makeop{Tr}    \makeop{Trd} \makeop{Res}
\makeop{ind} \makeop{depth} \makeop{Tr} \makeop{st} \makeop{Ad}
\makeop{Int} \makeop{tr}    \makeop{Sym} \makeop{can} \makeop{SO}
\makeop{torsion} \makeop{GSp} \makeop{Tor}\makeop{Ker} \makeop{rec}
\makeop{Ind} \makeop{Coker}
\makeop{vol} \makeop{Ext} \makeop{gr} \makeop{ad}
\makeop{Gr}\makeop{corank} \makeop{Ann}
\makeop{Hol} 
\makeop{Fitt} \makeop{Mp} \makeop{CAP}









\newcommand{\dBr}[1]{\llbracket{#1}\rrbracket}

\newcommand{\bT}{\mathbb{T}}

\newcommand{\cR}{\mathbb{I}}

\newcommand{\any}{?}

\newcommand{\bfff}{{\boldsymbol{f}}}
\newcommand{\bff}{{\boldsymbol{\varphi}}}
\newcommand{\bfg}{{\boldsymbol{g}}}
\newcommand{\bfh}{{\boldsymbol{h}}}

\begin{document}
	
\title[Generalised Kato classes on 
CM elliptic curves of rank 2]
{Generalised Kato classes on CM elliptic curves of rank 2}
\author[F.~Castella]{Francesc Castella}
	
\subjclass[2020]{Primary 11G05; Secondary 11G40}
\date{\today}
		
\address[]{Department of Mathematics, University of California Santa Barbara, CA 93106, USA}
\email{castella@ucsb.edu}




\begin{abstract}
Let $E/\Q$ be a CM elliptic curve and let $p\geq 5$ be a prime of good ordinary reduction for $E$. Suppose that $L(E,s)$ vanishes at $s=1$ and has sign $+1$ in its functional equation, so in particular ${\rm ord}_{s=1}L(E,s)\geq 2$. In this paper we slightly modify a construction of Darmon--Rotger to define a generalised Kato class $\kappa_p\in{\rm Sel}(\Q,V_pE)$, and prove the following rank two analogue of Kolyvagin's result:
\[
\kappa_p\neq 0\quad\Longrightarrow\quad{\rm dim}_{\Q_p}{\rm Sel}(\Q,V_pE)=2.
\]
Conversely, when ${\rm dim}_{\Q_p}{\rm Sel}(\Q,V_pE)=2$ we show that $\kappa_p\neq 0$ \emph{if and only if} the restriction map 
\[
{\rm Sel}(\Q,V_pE)\rightarrow E(\Q_p)\hat\otimes\bQ_p 
\]
is nonzero. The proof of these results, which extend and strenghten similar results of the author with Hsieh in the non-CM case, exploit a new link between the nonvanishing of generalised Kato classes and a main conjecture in anticyclotomic Iwasawa theory. 
\end{abstract}


\maketitle
\setcounter{tocdepth}{1}

\section{Introduction}

Let $E$ be an elliptic curve over the rationals. The systematic construction of  rational points on $E$, akin to the construction of Heegner points when ${\rm ord}_{s=1}L(E,s)\leq 1$ in the works of Gross--Zagier and Kolyvagin from the 1980s, is a well-known open problem lying behind further progress on the Birch--Swinnerton-Dyer conjecture in situations of higher order of vanishing. 
As an approximation to this problem, and following the reformulation and vast generalisation of the conjecture by Bloch--Kato, one might also attempt to construct non-torsion Selmer classes in such situations.

Suppose $L(E,s)$ has sign $+1$ in its functional equation and vanishes at $s=1$ (so ${\rm ord}_{s=1}L(E,s)\geq 2$). Fix a prime $p\geq 5$, and suppose $E$ has CM by an imaginary quadratic field in which $p$ splits. In this paper, we introduce a \emph{generalised Kato class} $\kappa_p\in{\rm Sel}(\Q,V_pE)$ in the $p$-adic Selmer group fitting into the exact sequence
\[
0\rightarrow E(\bQ)\otimes_{\Z_p}\bQ_p\rightarrow{\rm Sel}(\bQ,V_pE)\rightarrow\bQ_p\otimes_{\Z_p}{\rm Ta}_p\Sha(E/\bQ)\rightarrow 0,
\]
where ${\rm Ta}_p\Sha(E/\bQ)$ is the $p$-adic Tate module for the Shafarevich--Tate group of $E$. The class $\kappa_p$ is obtained from a  ``twisted variant'' of a construction due to Darmon--Rotger \cite{DR2.5}, whence the terminology. One of the main results in this paper is a proof of the following rank two analogue of Kolyvagin's result \cite{kol88}:
\[
\kappa_p\neq 0\quad\Longrightarrow\quad{\rm dim}_{\Q_p}{\rm Sel}(\Q,V_pE)=2.
\]
Conversely, still in the setting that $L(E,s)$ vanishes to positive even order at $s=1$, we show that if ${\rm dim}_{\Q_p}{\rm Sel}(\Q,V_pE)=2$ then $\kappa_p\neq 0$ \emph{if and only if} the restriction map
\[
{\rm Sel}(\bQ,V_pE)\rightarrow E(\Q_p)\hat{\otimes}\Q_p
\]
is nonzero, where $E(\Q_p)\hat{\otimes}\Q_p$ is the $p$-adic completion $\varprojlim_nE(\bQ_p)/p^nE(\bQ_p)$ tensored with $\bQ_p$. 

In the non-CM case, similar results (for the generalised Kato classes $\kappa_{p}^{\rm DR}$ introduced in \cite{DR2.5}) were obtained in a joint work of the author with M.-L.\,Hsieh \cite{cas-hsieh-ord}. The approach introduced in this paper to handle the CM case (a case that is essentially excluded by the approach in \emph{loc.\,cit.}; for instance, it requires $E$ to have some prime $\ell\neq p$ of multiplicative reduction) also yields a new proof of the original results in the non-CM case (see \cite{cas-GKC}).

In the rest of this Introduction we explain our results more precisely, and  some key ideas behind the proof.


\subsection{Diagonal cycle main conjecture}

Following the work of Darmon--Rotger \cite{DR2}, 
as further systematically developed also by Bertolini--Seveso--Venerucci (see \cite{DR3,BSV}), attached to a triple of Hida families $(\bff,\bfg,\bfh)$ with tame characters having product $\chi_\varphi\chi_g\chi_h=\omega^{2a}$ for some $a\in\Z$, where $\omega:(\Z/p\Z)^\times\rightarrow\Z_p^\times$ is the Teichm\"uller character, one has a \emph{big diagonal class}
\[
\kappa(\bff,\bfg,\bfh)\in\rH^1(\bQ,\mathbf{V}^\dagger),
\]
where $\mathbf{V}^\dagger$ is a self-dual twist of the triple tensor product of the Galois representations associated to $\bff,\bfg,\bfh$. It follows from its geometric construction (interpolating $p$-adic \'{e}tale Abel--Jacobi images of generalised Gross--Kudla--Schoen diagonal cycles in $p$-adic families) that $\kappa(\bff,\bfg,\bfh)$ lands in the \emph{balanced} Selmer group ${\rm Sel}^{\rm bal}(\bQ,\mathbf{V}^\dagger)$.

Denote by $\mathcal{R}=\cR_\varphi\hat\otimes_{\mathscr{O}}\cR_g\hat\otimes_{\mathscr{O}}\cR_h$ the completed tensor product of the ring of definition of $\bff,\bfg,\bfh$; this is a finite extension of the three-variable Iwasawa algebra $\Lambda\hat\otimes_{\mathscr{O}}\Lambda\hat\otimes_{\mathscr{O}}\Lambda$, where $\Lambda=\mathscr{O}\dBr{1+p\bZ_p}$ and $\mathscr{O}$ is a finite extension of $\bZ_p$. Let $\underline{Q}=(Q_0,Q_1,Q_2)\in{\rm Spec}(\mathcal{R})(\overline{\bQ}_p)$ run over the arithmetic specialisations of $\mathcal{R}$, as defined in $\S\ref{subsec:triple}$. Under the root number condition that for some $\underline{Q}$ we have
\begin{equation}\label{eq:H}
\varepsilon_\ell(\mathbf{V}_{\underline{Q}}^\dagger)=+1\quad\textrm{for all primes}\quad\ell\mid N_\varphi N_g N_h \tag{H}
\end{equation}
(a condition that is known to be independent of $\underline{Q}$), 
the sign $\varepsilon(\mathbf{V}_{\underline{Q}}^\dagger)\in\{\pm{1}\}$ in the functional equation of the triple product $L$-function $L(\mathbf{V}_{\underline{Q}}^\dagger,s)$ depends only on the  local root number of $\mathbf{V}_{\underline{Q}}^\dagger$ at $\ell=\infty$, which in turn depends only on the relative position of the weights $k_{Q_0},k_{Q_1},k_{Q_2}$ of the corresponding specialisations of $\bff,\bfg,\bfh$. In particular, one finds
\[
\varepsilon(\mathbf{V}_{\underline{Q}}^\dagger)=\varepsilon_\infty(\mathbf{V}_{\underline{Q}}^\dagger)=\begin{cases}
-1&\textrm{if $k_{Q_0}+k_{Q_1}+k_{Q_2}>2k_{Q_i}$ for all $i=0,1,2$,}\\[0.3em]
+1&\textrm{if $k_{Q_0}\geq k_{Q_1}+k_{Q_2}$.}
\end{cases}
\]
In the first (resp. second) case, we say that $\underline{Q}$ is in the \emph{balanced} (resp. $\varphi$-unbalanced) range. As shown in \cite{DR3,BSV}, the specialisations of $\kappa(\bff,\bfg,\bfh)$ in the balanced range recover generalised Gross--Kudla--Schoen diagonal cycles; the interplay between this and the $p$-adic interpolation of the central $L$-values $L(\mathbf{V}_{\underline{Q}}^\dagger,0)$ in the $\varphi$-unbalanced region plays a key role in this paper.
%

Motivated by Perrin-Riou's Heegner point main conjecture \cite{PR-HP} (and more precisely, Howard's extension in the context of big Heegner points \cite{howard-invmath}), the following is expected about $\kappa(\bff,\bfg,\bfh)$:

\begin{conjintro}[Big diagonal class main conjecture]\label{conj:DCMC}
Assume hypothesis \eqref{eq:H}. 
Then $\kappa(\bff,\bfg,\bfh)$ is not $\mathcal{R}$-torsion, the modules ${\rm Sel}^{\rm bal}(\Q,\mathbf{V}^\dagger)$ and $X^{\rm bal}(\bQ,\mathbf{A}^\dagger)$ both have $\mathcal{R}$-rank one, and 
\[
{\rm char}_{\mathcal{R}}\bigl(X^{\rm bal}(\Q,\mathbf{A}^\dagger)_{\rm tors}\bigr)={\rm char}_{\mathcal{R}}\biggl(\frac{{\rm Sel}^{\rm bal}(\Q,\mathbf{V}^\dagger)}{\mathcal{R}\cdot\kappa(\bff,\bfg,\bfh)}\biggr)^2
\]
in $\mathcal{R}\otimes\Q_p$, where the subscript ${\rm tors}$ denotes the $\mathcal{R}$-torsion submodule.
\end{conjintro}

Here $X^{\rm bal}(\bQ,\mathbf{A}^\dagger)={\rm Hom}({\rm Sel}(\bQ,\mathbf{A}^\dagger),\bQ_p/\bZ_p)$ is the Pontryagin dual of the balanced Selmer group 
with coefficients in $\mathbf{A}^\dagger={\rm Hom}_{\Z_p}(\mathbf{V}^\dagger,\mu_{p^\infty})$ (see $\S\ref{subsec:Sel-triple}$).

When the Hida families $\bfg,\bfh$ are specialised to classical modular forms $g,h$ of weights $l,m\geq 2$ with $l\equiv m\pmod{2}$ and $\bff$ is a CM Hida family, denoting by $\mathbb{V}^\dagger_{\bff gh}$ the resulting specialisation of $\mathbf{V}^\dagger$, the divisibility
\[
{\rm char}_{\mathcal{R}_{\bff gh}}\bigl(X^{\rm bal}(\Q,\mathbb{A}_{\bff gh}^\dagger)_{\rm tors}\bigr)\supset{\rm char}_{\Lambda_{\rm ac}}\biggl(\frac{{\rm Sel}^{\rm bal}(\Q,\mathbb{V}_{\bff gh}^\dagger)}{\mathcal{R}_{\bff gh}\cdot\kappa(\bff,g,h)}\biggr)^2
\]
in $\mathcal{R}_{\bff gh}\otimes\Q_p$  
was proved under some hypotheses (including the non-triviality of $\kappa(\bff,g,h)$) in \cite[Thm.~9.10]{ACR} 
by constructing an anticyclotomic Euler system (in the sense of Jetchev--Nekov{\'a}{\v{r}}--Skinner \cite{JNS}) having the specialised big diagonal class $\kappa(\bff,g,h)$ as its bottom class. On the other hand, closer to the setting of this paper, when $\bff$ is specialised to a classical modular form $\varphi$ of weight $2r\geq 2$ and $\bfg,\bfh$ are both CM Hida families with respect to the same imaginary quadratic field, a similar divisibility is obtained in \cite{C-Do} 
by building an anticyclotomic Euler system containing the resulting specialisation of $\kappa(\bff,\bfg,\bfh)$ as its bottom class. However, both of these results are subject to a ``big image'' hypothesis (for $(g,h)$ and for $\varphi$, respectively) which excludes the CM case.


The first main result of this paper is the proof of a two-variable specialisation of Conjecture~\ref{conj:DCMC} in the CM case. We consider the case in which $\varphi=\theta(\lambda_0)\in S_2(\Gamma_1(N_\varphi))$ has CM by an imaginary quadratic field $K$ in which $p=\pp\ppbar$ splits, and 
\[
\bfg=\boldsymbol{\theta}_{\lambda_1}(S_1),\quad\bfh=\boldsymbol{\theta}_{\lambda_2}(S_2)
\] 
are CM Hida families 
by the same $K$. 
Here $\lambda_0$ (resp. $\lambda_1,\lambda_2$) is a Hecke character of $K$ of infinity type $(-1,0)$ (resp. finite order) subject to the self-duality condition
\begin{equation}\label{eq:sd-lambda-intro}
\chi_{\lambda_0}\chi_{\lambda_1}\chi_{\lambda_2}=\eta_{K/\Q},
\end{equation}
where $\chi_{\lambda_i}$ is the central character of $\lambda_i$, and $\eta_{K/\Q}$ denotes the quadratic character corresponding to $K/\Q$. We assume  the conductor $N_\varphi$ of $\varphi$ is coprime to $p$, and let $N_g, N_h$ denote the tame conductor of $\bfg,\bfh$, respectively. Since it will suffice for our application, we also assume for simplicity that the class number of $K$ is coprime to $p$.

Letting $\mathbb{V}_{\varphi\bfg\bfh}^\dagger$ denote the resulting two-variable specialisation of $\mathbf{V}^\dagger$, under mild hypotheses on $\bfg$ and $\bfh$ (see Proposition~\ref{prop:CM-ind}) it is easy to see that
\begin{equation}\label{eq:dec-intro}
\begin{aligned}
\mathbb{V}_{\varphi\bfg\bfh}^\dagger&\simeq{\rm Ind}_{K}^\Q\bigl(\lambda_0^{-1}\lambda_1^{-1}\lambda_2^{-1}\Psi_{W_1}^{1-\cc}\bigr)\oplus{\rm Ind}_{K}^\Q\bigl(\lambda_0^{-1}\lambda_1^{-\cc}\lambda_2^{-\cc}\Psi_{W_1}^{\cc-1}\bigr)\\
&\quad\oplus{\rm Ind}_{K}^\Q\bigl(\lambda_0^{-1}\lambda_1^{-1}\lambda_2^{-\cc}\Psi_{W_2}^{1-\cc}\bigr)\oplus{\rm Ind}_{K}^\Q\bigl(\lambda_0^{-1}\lambda_1^{-\cc}\lambda_2^{-1}\Psi_{W_2}^{\cc-1}\bigr),
\end{aligned}
\end{equation}
where $W_1=\mathbf{u}^{-1}(1+S_1)^{1/2}(1+S_2)^{1/2}-1$ and
$W_2=(1+S_1)^{1/2}(1+S_2)^{-1/2}-1$ are formal variables parametrising anticyclotomic weight space, with $\mathbf{u}=1+p$.  By \eqref{eq:sd-lambda-intro}, the Hecke characters
\begin{equation}\label{eq:lambda-intro}
\lambda_0^{-1}\lambda_1^{-1}\lambda_2^{-1},\quad \lambda_0^{-1}\lambda_1^{-\cc}\lambda_2^{-\cc},\quad 
\lambda_0^{-1}\lambda_1^{-\cc}\lambda_2^{-\cc},\quad 
\lambda_0^{-1}\lambda_1^{-\cc}\lambda_2^{-1}
\end{equation}
appearing in \eqref{eq:dec-intro} are all self-dual, in the sense that their associated Hecke $L$-function is self-dual, with a functional equation relating its values at $s$ and $-s$. Let ${\rm sign}(\lambda_0\lambda_1\lambda_2)\in\{\pm{1}\}$ be the sign in the functional equation for the Hecke $L$-function  $L(\lambda_0^{-1}\lambda_1^{-1}\lambda_2^{-1},s)$, and similarly for the other three characters in \eqref{eq:lambda-intro}. 

The ordinary $p$-stabilisation of $\varphi$ can be obtained as a weight $2$ specialisation of a unique CM Hida family $\boldsymbol{\varphi}$, and we let $\mathcal{R}_{\varphi\bfg\bfh}\simeq\mathscr{O}\dBr{S_1,S_2}$ be the resulting specialisation of the coefficient ring $\mathcal{R}$. We can now state the first main result of this paper.

\begin{thmintro}\label{thmintro:A}
Let the triple
\[
(\varphi,\bfg,\bfh)=(\theta(\lambda_0),\boldsymbol{\theta}_{\lambda_1}(S_1),\boldsymbol{\theta}_{\lambda_2}(S_2)) 
\]
be as above,
and suppose that:
\begin{itemize}
\item[(i)] 
$\lambda_i(\pp)\not\equiv\lambda_i(\ppbar)\pmod{p}$ for $i=0,1,2$.
\item[(ii)] ${\rm sign}(\lambda)={\rm sign}(\lambda(\lambda_1\lm_2)^{\cc-1})={\rm sign}(\lambda\lambda_2^{\cc-1})={\rm sign}(\lambda\lambda_1^{\cc-1})=+1$. 
\end{itemize}
Then $\kappa(\varphi,\bfg,\bfh)$ is not $\mathcal{R}_{\varphi\bfg\bfh}$-torsion, the modules ${\rm Sel}^{\rm bal}(\Q,\mathbb{V}_{\varphi\bfg\bfh}^\dagger)$ and $X^{\rm bal}(\bQ,\mathbb{A}_{\varphi\bfg\bfh}^\dagger)$ both  have $\mathcal{R}_{\varphi\bfg\bfh}$-rank one, and 
\[
{\rm char}_{\mathcal{R}_{\varphi\bfg\bfh}}\bigl(X^{\rm bal}(\Q,\mathbb{A}_{\varphi\bfg\bfh}^\dagger)_{\rm tors}\bigr)={\rm char}_{\mathcal{R}_{\varphi\bfg\bfh}}\biggl(\frac{{\rm Sel}^{\rm bal}(\Q,\mathbb{V}_{\varphi\bfg\bfh}^\dagger)}{\mathcal{R}_{\varphi\bfg\bfh}\cdot\kappa(\varphi,\bfg,\bfh)}\biggr)^2
\]
in $\mathcal{R}_{\varphi\bfg\bfh}\otimes\Q_p$. In other words, Conjecture~\ref{conj:DCMC} holds for $(\varphi,\bfg,\bfh)$.
\end{thmintro}

A key input in the proof of this result is the relation between Conjecture~\ref{conj:DCMC} and the anticyclotomic main conjecture for Hecke characters. Indeed, in the setting of Theorem~\ref{thmintro:A}, we show that the balanced Selmer group ${\rm Sel}^{\rm bal}(\bQ,\mathbb{V}_{\varphi\bfg\bfh}^\dagger)$ decomposes as
\begin{equation}\label{eq:dec-bal-intro}
\begin{aligned}
{\rm Sel}^{\rm bal}(\bQ,\mathbb{V}_{\varphi\bfg\bfh}^\dagger)&\simeq
{\rm Sel}_{\emptyset,0}(K,T_{\lambda}\otimes\Psi_{W_1}^{1-\cc})\oplus{\rm Sel}_{0,\emptyset}(K,T_{\lambda(\lambda_1\lambda_2)^{\cc-1}}\otimes\Psi_{W_1}^{\cc-1})\\
&\quad\oplus{\rm Sel}_{\emptyset,0}(K,T_{\lambda\lambda_2^{\cc-1}}\otimes\Psi_{W_2}^{1-\cc})\oplus{\rm Sel}_{\emptyset,0}(K,T_{\lambda\lambda_1^{\cc-1}}\otimes\Psi_{W_2}^{\cc-1}).
\end{aligned}
\end{equation}
The Selmer groups in the right-hand side of this decomposition correspond to usual (i.e., Bloch--Kato) anticyclotomic Selmer groups attached to Hecke characters, except for the Selmer group
\[
{\rm Sel}_{0,\emptyset}(K,T_{\lambda(\lambda_1\lm_2)^{\cc-1}}\otimes\Psi_{W_1}^{\cc-1}),
\]
which is obtained from the usual anticyclotomic Selmer group for $T_{\lambda(\lambda_1\lambda_2)^{\cc-1}}$ by \emph{reversing} the local conditions at the primes above $\pp$ and $\ppbar$. 
%
We exploit the fact that Conjecture~\ref{conj:DCMC} follows from 
the Iwasawa--Greenberg main conjecture for the 
$p$-adic triple product $L$-function $\mathscr{L}_p^\varphi(\varphi,\bfg,\bfh)\in\mathcal{R}_{\varphi\bfg\bfh}$ 
constructed by Hsieh \cite{hsieh-triple}. Under certain conditions  preventing the vanishing of $\mathscr{L}_p^\varphi(\varphi,\bfg,\bfh)$ for sign reasons, this main conjecture predicts that the \emph{$\varphi$-unbalanced} Selmer group $X^\varphi(\Q,\mathbb{A}_{\varphi\bfg\bfh}^\dagger)$ is $\mathcal{R}_{\varphi\bfg\bfh}$-torsion, with characteristic ideal generated by $\mathscr{L}_p^\varphi(\varphi,\bfg,\bfh)^2$. We prove a decomposition for $X^\varphi(\bQ,\mathbb{A}_{\varphi\bfg\bfh}^\dagger)$ analogous to \eqref{eq:dec-bal-intro} in which (contrary to the case of the  balanced Selmer groups) \emph{all} direct summands agree with classical anticyclotomic Selmer groups for Hecke characters. Together with a parallel factorisation for $\mathscr{L}_p^\varphi(\varphi,\bfg,\bfh)^2$ into a product of four anticyclotomic Katz $p$-adic $L$-functions, we thus deduce from the works of Agboola--Howard \cite{AHsplit} and Arnold \cite{Arnold} (an anticyclotomic specialisation of Rubin's proof of the Iwasawa main conjecture for $K$ \cite{rubin-IMC}) a proof of the Iwasawa--Greenberg main conjecture for $X^\varphi(\bQ,\mathbb{A}^\dagger_{\varphi\bfg\bfh})$. (Condition (ii) of Theorem~\ref{thmintro:A} 
is needed at this point,  
as otherwise our results 
imply that $X^\varphi(\bQ,\mathbb{A}_{\varphi\bfg\bfh}^\dagger)$ has positive $\mathcal{R}_{\varphi\bfg\bfh}$-rank.) The proof of Theorem~\ref{thmintro:A} then follows. 

\subsection{Generalised Kato classes attached to $E$}

We keep $K$ to be an imaginary quadratic field in which $p=\pp\ppbar$ splits, and now let $E/\bQ$ be an elliptic curve with CM by the ring of integers of $K$. Note that the splitting condition on $p$ implies that $E$ has good ordinary reduction of $p$. Let $\psi_E$ be the Hecke character of $K$ attached to $E$, so that 
\[
L(E,s)=L(\psi_E,s).
\]
Suppose  the sign is the functional equation of $L(E,s)$ is $w=+1$.  
In the second part of the paper, we choose Hecke characters $\lambda_0,\lambda_1,\lambda_2$ for $K$ as above satisfying  
\begin{equation}\label{eq:nonvan-L}
\psi_E=\lambda_0\lambda_1^\cc\lambda_2^\cc,\quad
L(\psi_E^{-1}\lambda_1^{\cc-1},0)\cdot L(\psi_E^{-1}\lambda_2^{\cc-1},0)\cdot L(\psi_E^{-1}(\lambda_1\lambda_2)^{\cc-1},0)\neq 0,
\end{equation}
whose existence follows from nonvanishing results due to Greenberg and Rohrlich \cite{greenberg-critical,rohrlich-ac}, and --- inspired by a construction of generalised Kato classes due to Darmon--Rotger \cite{DR2,DR2.5} ---  we let
\[
\kappa_p\in\rH^1(\Q,V_pE)
\]
be the image of the resulting big diagonal class $\kappa(\varphi,\bfg,\bfh)$  (for $(\varphi,\bfg,\bfh)=(\theta(\lambda_0),\boldsymbol{\theta}_{\lambda_1}(S_1),\boldsymbol{\theta}_{\lambda_2}(S_2))$ as above)  
under the composition
\[
\rH^1(\bQ,\mathbb{V}_{\varphi\bfg\bfh}^\dagger)\rightarrow\rH^1\bigl(\bQ,{\rm Ind}_K^{\bQ}(T_{\lambda_0\lambda_1^{\cc}\lambda_2^{\cc}}\otimes\Psi_{W_1}^{\cc-1})\bigr)\rightarrow\rH^1(\bQ,{\rm Ind}_K^{\Q}(T_{\psi_E}))\simeq\rH^1(\Q,T_pE)
\]
arising from projection onto the second direct summand in \eqref{eq:dec-intro} and the specialisation at $W_1=0$. The construction of $\kappa_p$ might be seen as a twisted variant of the construction of geneneralised Kato classes in \cite{DR2.5}, which for an elliptic curve $E/\bQ$ as above would take $\lambda_0=\psi_E$ and the finite order Hecke characters $\lambda_1,\lambda_2$  to be inverses of each other (similarly as in \cite{cas-hsieh-ord}).
Nevertheless, from the explicit reciprocity law of \cite{DR3} and \cite{BSV} we deduce the implication
\[
L(E,1)=0\quad\Longrightarrow\quad\kappa_p\in{\rm Sel}(\bQ,V_pE).
\]
Since we assume that $L(E,s)$ has sign $+1$, the vanishing of $L(E,1)$ implies that ${\rm ord}_{s=1}L(E,s)\geq 2$, and so by the Bloch--Kato conjecture \cite{BK}, the Selmer group ${\rm Sel}(\Q,V_pE)$ is expected to be at least $2$-dimensional. Our next result is consistent with this expectation, and further justifies the view of $\kappa_p$ as a ``rank $2$ $p$-adic regulator''.

\begin{thmintro}
\label{thmintro:B}
%
Suppose $L(E,s)$ vanishes to positive even order at $s=1$. 
%
Let $\lambda_0,\lambda_1,\lambda_2$ be any triple of Hecke characters of $K$ as above satisfying \eqref{eq:nonvan-L} and the conditions (i)--(ii) in Theorem~\ref{thmintro:A}, and let $\kappa_p$ be the associated generalised Kato class.  Then 
\[
\kappa_p\neq 0\quad\Longrightarrow\quad{\rm dim}_{\Q_p}{\rm Sel}(\Q,V_pE)=2.
\]
Conversely, if ${\rm dim}_{\Q_p}{\rm Sel}(\Q,V_pE)=2$ then $\kappa_p\neq 0$ if and only if the restriction map
\[
{\rm res}_p:{\rm Sel}(\bQ,V_pE)\rightarrow E(\bQ_p)\hat\otimes\bQ_p
\]
is nonzero. 
\end{thmintro}

The existence of (infinitely many) triples $\lambda_0,\lambda_1,\lambda_2$ satisfying \eqref{eq:nonvan-L} and the conditions (i)-(ii) in Theorem~\ref{thmintro:A} follows easily from the aforementioned nonvanishing results due to Greenberg and Rorhlich (note that condition \eqref{eq:sd-lambda-intro} is implied by $\psi_E=\lambda_0\lambda_1^\cc\lambda_2^\cc$, and condition (ii) is implied by  \eqref{eq:nonvan-L} and our root number assumption on $E$). 

In addition to Theorem~\ref{thmintro:A}, the ingredients in the proof of Theorem~\ref{thmintro:B} are a version of Mazur's control theorem for the Selmer groups in the decomposition \eqref{eq:dec-intro} and a global duality argument allowing us to relate the rank of ${\rm Sel}_{0,\emptyset}(K,T_{\psi_E})$ and ${\rm dim}_{\bQ_p}{\rm Sel}(\bQ,V_pE)$.

The result of Theorem~\ref{thmintro:B} is consistent with predictions by Darmon--Rotger \cite{DR2.5}, and  it offers some new insights. More precisely, that the conditions ${\rm dim}_{\Q_p}{\rm Sel}(\bQ,V_pE)=2$ and ${\rm res}_p\neq 0$ imply $\kappa_p\neq 0$ is suggested by [\emph{op.\,cit.}, Conj.~3.12] (in the ``rank $(2,0)$ setting'' of $\S{4.5.3}$); a new insight of Theorem~\ref{thmintro:B} is that, when ${\rm Sel}(\Q,V_pE)$ is $2$-dimensional, the condition ${\rm res}_p\neq 0$ is also \emph{necessary} for the nonvanishing of $\kappa_p$.

\subsection{Application to rank two Selmer basis}

The construction of $\kappa_p\in{\rm Sel}(\bQ,V_pE)$ depends on a choice of $\lambda_0,\lambda_1,\lambda_2$, but it follows from our results 
that different choices give rise to the same Selmer class up to scaling. (In fact, whenever nonzero, $\kappa_p$ generates the one-dimensional subspace ${\rm ker}({\rm res}_p)\subset{\rm Sel}(\bQ,V_pE)$.) 
It is then natural to ask for a class in the two-dimensional ${\rm Sel}(\Q,V_pE)$ complementary to the line spanned by $\kappa_p$. Our results also yield an answer to this question under some hypotheses.

%

Let $\cW$ be the completion of the ring of integers of the maximal unramified extension of $\bQ_p$, and let $\cL_{\pp}=\cL_{\pp,\mathfrak{f}}\in\cW\dBr{Z(\mathfrak{f})}$ be the Katz $p$-adic $L$-function recalled in Theorem~\ref{thm:katz} below, where $Z(\mathfrak{f})$ is the Galois group of the extension $K(E[p^\infty])/K$ and $\mathfrak{f}\subset\cO_K$ is the conductor of $\psi_E$.  For $s\in\Z_p$ define
\[
L_\pp(s)=\cL_{\pp}(\psi_E\langle\psi_E\rangle^{s-1}),\quad
L_\pp^*(s)=\cL_\pp(\psi_E^\cc\langle\psi_E^\cc\rangle^{s-1}),
\]
where $\langle-\rangle:\Z_p^\times\rightarrow 1+p\Z_p$ is the projection onto the $1$-units. 


\begin{thmintro}\label{thm:2}
Let the hypotheses be as in Theorem~\ref{thmintro:B}, and assume in addition that $\Sha(E/\Q)[p^\infty]$ is finite and 
the following conditions hold:
\begin{equation}\label{eq:ord=2}
{\rm ord}_{s=1}L_{\pp}(s)=2,\quad{\rm ord}_{s=1}L_{\pp}^*(s)=1.
\end{equation}
Then ${\rm Sel}(\Q,V_pE)$ is $2$-dimensional, with
\[
{\rm Sel}(\Q,V_pE)=\Q_p\kappa_p\oplus\Q_p x_\pp^{(2)},
\]
where $\kappa_p$ is a generalised Kato class and $x_\pp^{(2)}$ is a ``derived'' elliptic unit.
\end{thmintro}

\begin{proof}
The assumption that ${\rm ord}_{s=1}L_\pp(s)=2$ implies that 
\[
r:={\rm dim}_{\Q_p}{\rm Sel}(\Q,V_pE)\leq 2
\] 
by Rubin's proof of the Iwasawa main conjecture for $K$ and the work of Perrin-Riou (see \cite[Thm.\,4.1]{rubin-IMC} and \cite[Ch.~IV, Thm.\,22]{PR-BSMF}). On the other hand, it also implies $r\geq 2$ by 
the theorem of Coates--Wiles \cite{CW} and the $p$-parity conjecture \cite{guo-parity}. Therefore $r=2$, and by \cite[Prop.~4.4]{rubin-points} the construction of derived elliptic units in  [\emph{op.\,cit.}, \S{6}] yields a class
\[
x_\pp^{(2)}\in{\rm Sel}(K,T_{\pp}E)\simeq{\rm Sel}(\Q,T_pE)
\]
where $T_\pp E$ is the $\pp$-adic Tate module of $E$.
Since by \cite[Thm.\,9.5(ii)]{rubin-points} 
and our assumptions we have ${\rm res}_p(x_\pp^{(2)})\neq 0$, the result follows from Theorem~\ref{thmintro:B}. 
\end{proof}

\begin{rem}
The rank $2$ case of the $\pp$-adic Birch--Swinnerton-Dyer conjecture \cite[Conj.~B]{BGS-pBSD} (resp. Rubin's variant \cite[\S{1.5}]{rubin-BSD}) predicts the equivalence 
\[
{\rm ord}_{s=1}L_\pp(s)=2\;\textrm{(resp. ${\rm ord}_{s=1}L_\pp^*(s)=1$)}\quad\overset{?}\Longleftrightarrow\quad{\rm dim}_{\Q_p}{\rm Sel}(\Q,V_pE)=2.
\]
As shown in the proof of Theorem~\ref{thm:2}, the implication ``$\Longrightarrow$'' follows from known results, and the new contribution here is the \emph{explicit construction} of a $\Q_p$-basis for ${\rm Sel}(\Q,V_pE)$. Conversely, by \cite[Ch.~IV, Thm.~22]{PR-BSMF} (resp. \cite[Thm.~5]{rubin-BSD}) we know that
\[
{\rm dim}_{\Q_p}{\rm Sel}(\Q,V_pE)=2\quad\Longrightarrow\quad{\rm ord}_{s=1}L_\pp(s)\geq 2\;\textrm{(resp. ${\rm ord}_{s=1}L_\pp^*(s)\geq 1$)},
\]
with equality if and only if the $\pp$-adic height pairing $\langle\,,\,\rangle_{\pp}$ of \cite[Ch.~IV]{PR-BSMF} associated to $\psi_E$ is non-degenerate (resp. $\langle\,,\,\rangle_{\pp}$ is non-degenerate, $\#\Sha(E/\Q)[\pp^\infty]<\infty$ and $\langle\kappa^*_p,\kappa^*_p\rangle_{\pp}\neq 0$, where 
\[
\kappa^*_p={\rm log}_{E,p}(Q)P-{\rm log}_{E,p}(P)Q
\]
for a $\Q_p$-basis $(P,Q)$ of ${\rm Sel}(\Q,V_pE)$, and with ${\rm log}_{E,p}:E(\bQ_p)\rightarrow\Q_p$ the formal group logarithm.) It is interesting to note that our proof of Theorem~\ref{thmintro:B} shows that $\kappa_p\equiv\kappa_p^*\pmod{\Q_p^\times}$.
\end{rem}

\subsection{Acknowledgements} 

It is a pleasure to thank Ashay Burungale for stimulating exchanges, especially about the idea to consider a setting along the lines of that in $\S\ref{subsec:setting}$. We are also grateful to Ming-Lun Hsieh for his comments on an early draft.  
Finally, we thank the referees for a number of helpful comments and suggestions that led to significant improvements in the exposition. 

At different stages during the preparation of this paper, the author was supported by the NSF grants DMS-2101458 and DMS-2401321, and the 2024-2025 AMS Centennial Research Fellowship.

\section{$p$-adic $L$-functions}

In this section we recall the two $p$-adic $L$-functions that will appear in our arguments, one due to Katz \cite{Katz49} attached to Hecke characters of an imaginary quadratic field, and another due to Hsieh \cite{hsieh-triple} (extending and refining earlier constructions due to Harris--Tilouine \cite{harris-tilouine} and Darmon--Rotger \cite{DR1})  attached to triple products of modular forms in Hida families. 

Fix a prime $p>2$ and an imaginary quadratic field $K$ with ring of integers $\cO_K$ in which
\begin{equation}\label{eq:spl}
\textrm{$(p)=\pp\ppbar$ splits,}\tag{spl}
\end{equation}
with $\pp$ the prime of $K$ above $p$ determined by a fixed embedding $\iota_p:\overline{\Q}\hookrightarrow\overline{\Q}_p$.

\subsection{Katz $p$-adic $L$-function}

Denote by $D_K<0$ the discriminant of $K$, and fix an integral ideal $\mathfrak{C}\subset\cO_K$ coprime to $p$. Let $\mathcal{W}$ be 
the Witt ring $W(\overline{\mathbf{F}}_p)$ (or a finite extension thereof, later in the paper), and denote by $Z(\mathfrak{C})$ the Galois group of the ray class field extension $K(\mathfrak{C}p^\infty)/K$. 

We say that a Hecke character $\chi:K^\times\backslash\mathbb{A}_K^\times\rightarrow\bC^\times$ has infinity type $(a,b)$ if $\chi_\infty(z)=z^a\overline{z}^b$. We take \emph{geometric} normalisations for the reciprocity maps of class field theory. Let $\mathfrak{f}_\chi$ denote the conductor of $\chi$, and suppose the prime-to-$p$ part $\mathfrak{f}_\chi^{(p)}$ of $\mathfrak{f}_\chi$ divides $\mathfrak{C}$. Viewing $\chi$ as a $\overline{\Q}$-valued character (via a fixed embedding $\iota_\infty:\overline{\Q}\hookrightarrow\bC^\times$) defined on the group of fractional ideals of $K$ coprime to $\mathfrak{f}_\chi$, with a slight abuse of notation we also denote by $\chi$ the character of $Z(\mathfrak{C})$ defined by the rule
\[
\chi(\sigma_\mathfrak{l}^{-1})=\iota_p(\chi(\mathfrak{l}))
\]
for all primes $\mathfrak{l}\nmid\mathfrak{C}$, where $\sigma_\mathfrak{l}\in Z(\mathfrak{C})$ is the Artin symbol of $\mathfrak{l}$.

\begin{thm}\label{thm:katz}
There exists an element $\cL_{\pp,\mathfrak{C}}\in\cW\dBr{Z(\mathfrak{C})}$ such that for all Hecke characters $\chi$ 
of conductor $\mathfrak{f}_\chi\mid \mathfrak{C}p^\infty$ and infinity type $(k+j,-j)$ with $k\geq 1$, $j\geq 0$, we have
\begin{equation}\label{eq:Katz}
\cL_{\pp,\mathfrak{C}}(\chi)=\biggl(\frac{\Omega_p}{\Omega_\infty}\biggr)^{k+2j}\cdot\Gamma(k+j)\cdot\biggl(\frac{2\pi}{\sqrt{D_K}}\biggr)^j\cdot\mathcal{E}_\pp(\chi)
\cdot L^{(p\mathfrak{C})}(\chi,0),\nonumber
\end{equation}
where $\mathcal{E}_\pp(\chi)$ is the modified Euler factor
\[
\mathcal{E}_\pp(\chi)=\frac{L(0,\chi_\pp)}{\varepsilon(0,\chi_\pp)\cdot L(1,\chi_\pp^{-1})},
\]
and $L^{(p\mathfrak{C})}(\chi,s)$ is the $L$-function of $\chi$ deprived from the Euler factors at the primes dividing $p\mathfrak{C}$. Moreover, we have the functional equation 
\[
\cL_{\pp,\mathfrak{C}}(\chi)=\cL_{\pp,\overline{\mathfrak{C}}}((\chi^{\cc})^{-1}\mathbf{N}^{-1}),
\]
where $\chi^{\cc}$ is the composition of $\chi$ with the action of the non-trivial element $\cc\in{\rm Aut}(K/\Q)$, and the equality is up to a $p$-adic unit.

\end{thm}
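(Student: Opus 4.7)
The statement is a classical result due to Katz \cite{Katz49}, and the plan is to sketch how its construction and key assertions are verified. First, I would construct $\cL_{\pp,\mathfrak{c}}$ as a $\cW$-valued measure on $Z(\mathfrak{c})$ via Katz's Eisenstein measure on the Igusa tower over the ordinary locus of the modular curve of level $\mathfrak{c}$. Identifying the ring of $p$-adic modular forms with continuous $\cW$-valued functions on this Igusa tower, one builds, moment by moment, a measure taking values in $p$-adic modular forms whose classical specialisations are the nearly-holomorphic Eisenstein series of weight $k+2j$ entering the Rankin--Selberg integral representation of the Hecke $L$-values in question. Pulling this Eisenstein measure back along the point of the Igusa tower corresponding to a CM elliptic curve with $\cO_K$-multiplication and appropriate $\mathfrak{c}$-level structure, and invoking class field theory to identify $Z(\mathfrak{c})$ with the Galois group of a ray class field, then produces the desired element $\cL_{\pp,\mathfrak{c}}\in\cW\dBr{Z(\mathfrak{c})}$.

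Next I would verify the interpolation formula. For a Hecke character $\chi$ of infinity type $(k+j,-j)$ with $k\geq 1$ and $j\geq 0$, the moment of the measure against $\chi$ is, by construction, the value at the chosen CM point of a holomorphic Eisenstein series obtained by applying the Maass--Shimura operator $j$ times. By Damerell's theorem together with Shimura's algebraicity theorem for critical values of Hecke $L$-functions, this value equals
\[
\Gamma(k+j)\cdot\biggl(\frac{2\pi}{\sqrt{D_K}}\biggr)^j\cdot L^{(p\mathfrak{c})}(\chi,0)
\]
up to the transcendental period $\Omega_\infty^{k+2j}$; passing to the $p$-adic modular form side exchanges this for $\Omega_p^{k+2j}$, accounting for the period ratio $(\Omega_p/\Omega_\infty)^{k+2j}$ in the formula. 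Finally, the modified Euler factor $\cE_\pp(\chi)$ emerges from the $p$-ordinary stabilisation required to project the Eisenstein series into the space of $p$-adic modular forms, via a local zeta-integral computation at $\pp$. This last step is where most of the combinatorial bookkeeping lies and is, in my view, the main technical obstacle; uniqueness of $\cL_{\pp,\mathfrak{c}}$ then follows from the Zariski density of the characters above in $\mathrm{Spec}(\cW\dBr{Z(\mathfrak{c})})$.

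For the functional equation, I would invoke Hecke's complex functional equation, which relates $L(\chi,s)$ to an $L$-function of the dual character $(\chi^{\cc})^{-1}\bN^{-1}$. Under this involution the conductor $\mathfrak{c}$ is replaced by $\overline{\mathfrak{c}}$, and the roles of $\pp$ and $\ppbar$ are interchanged. Substituting into the interpolation formula on both sides of the claimed identity and matching the archimedean factors, $\Gamma$-factors, and modified Euler factors term by term yields $\cL_{\pp,\mathfrak{c}}(\chi)=\cL_{\pp,\overline{\mathfrak{c}}}((\chi^{\cc})^{-1}\bN^{-1})$ up to a $p$-adic unit arising from the discrepancy between the $\pp$- and $\ppbar$-normalisations; by Zariski density this relation promotes to an identity of measures.
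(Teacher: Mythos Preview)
Your sketch is a reasonable outline of Katz's construction and captures the main ideas correctly: the Eisenstein measure on the Igusa tower, pullback to CM points, the period comparison $\Omega_p/\Omega_\infty$ via the two trivialisations of the Hodge bundle, and the emergence of $\mathcal{E}_\pp(\chi)$ from the ordinary stabilisation. The functional equation argument via Hecke's classical functional equation plus Zariski density is also the standard route.

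However, you should be aware that the paper does not prove this theorem at all. It is quoted as a known result, with references to Katz's original paper \cite{Katz49} and Hida--Tilouine \cite{HT-ENS} for the construction, to \cite[Prop.~4.19]{hsieh-crelle-mu} for the precise formulation of the interpolation property used here, and to \cite[Thm.~II.6.4]{deshalit} for the functional equation. So there is no ``paper's own proof'' to compare against beyond these citations. Your sketch is broadly faithful to what those references do, but if you are writing this up for the paper you should simply cite, not reprove; and if you are writing it up as an independent exposition, several steps (the local zeta integral at $\pp$ yielding exactly $\mathcal{E}_\pp(\chi)$, and the matching of Euler and $\Gamma$-factors in the functional equation up to a $p$-adic unit) would need to be made precise rather than asserted.
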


\begin{proof}
See \cite{Katz49}, \cite{HT-ENS}; our formulation of the interpolation property follows \cite[Prop.~4.9]{hsieh-crelle-mu} most closely. The functional equation is shown in \cite[Thm.~II.6.4]{deshalit}. 
\end{proof}

\subsection{Triple product $p$-adic $L$-function}\label{subsec:triple}

Let $\cR$ be a normal domain finite flat over 
\[
\Lambda:=\mathscr{O}\dBr{1+p\Z_p},
\] 
where $\mathscr{O}$ is the ring of integers of a finite extension of $\Q_p$. For a positive integer $N$ with $p\nmid N$ and a Dirichlet character $\chi:(\Z/Np\Z)^\times\rightarrow\mathscr{O}^\times$, we denote by $S^o(N,\chi,\cR)\subset\cR\dBr{q}$ the space of ordinary $\cR$-adic cusp forms of tame level $N$ and branch character $\chi$ as defined in \cite[\S{3.1}]{hsieh-triple}. 

Denote by $\mathfrak{X}_\cR^+\subset{\rm Spec}\,\cR(\overline{\Q}_p)$ the set of \emph{arithmetic points} of $\cR$, consisting of the ring homomorphisms $Q:\cR\rightarrow\overline{\Q}_p$ such that $Q\vert_{1+p\Z_p}$ is given by $z\mapsto z^{k_Q}\epsilon_Q(z)$ for some $k_Q\in\Z_{\geq 2}$ called the \emph{weight of $Q$} and $\epsilon_Q(z)\in\mu_{p^\infty}$. As in  \cite[\S{3.1}]{hsieh-triple}, we say that $\boldsymbol{f}=\sum_{n=1}^\infty a_n(\bfff)q^n\in S^o(N,\chi,\cR)$ is a \emph{primitive Hida family} if for every $Q\in\mathfrak{X}_\cR^+$ the specialisation $\boldsymbol{f}_Q$ gives the $q$-expansion of an ordinary $p$-stabilised newform of weight $k_Q$ and tame conductor $N$. Attached to such $\bfff$ we let $\mathfrak{X}_{\cR}^{\rm cls}$ be the set of ring homomorphisms $Q$ as above with $k_Q\in\Z$ such that $\bfff_Q$ is the $q$-expansion of a classical modular form. Note that $\mathfrak{X}_{\cR}^{\rm cls}$ contains $\mathfrak{X}_{\cR}^{+}$ by Hida's results, but it can also contain points $Q$ --- of special relevance for this paper --- for which $\bfff_Q$ is a classical \emph{weight one} form.

For $\bfff$ a primitive Hida family, we let 
\[
\rho_{\bfff}:G_\Q\rightarrow{\rm Aut}_{\cR}(V_\bfff)\simeq{\rm GL}_2(\cR)
\]
denote the associated Galois representation, which  here we take to be the \emph{dual} of that in \cite[\S{3.2}]{hsieh-triple}; in particular, the determinant of $\rho_\bfff$ is $\chi_\cR\cdot\varepsilon_{\rm cyc}$ in the notations of \emph{loc.\,cit.}, where $\varepsilon_{\rm cyc}$ is the $p$-adic cyclotomic character. Note also that the $\rho_\bfff$ in \emph{loc.\,cit.} is valued in 
${\rm Frac}(\cR)$, but it is well-known that when the residual representation $\bar\rho_{\bfff}$ is absolutely irreducible, one may find a free $\cR$-module $V_\bfff$ of rank $2$ realising the same Galois representation after extension of scalars. 
There may be different $V_\bfff$ giving rise to the same rational $\rho_\bfff$ after extension of scalars; for now we take any such $V_\bfff$ (assuming $\bar{\rho}_\bfff$ to be  absolutely irreducible), and later in $\S\ref{subsec:dec-S}$ we specify a particular choice well-suited to our applications.

By \cite[Thm.~2.2.2]{wiles88}, restricted to $G_{\Q_p}$ the Galois representation $V_\bfff$ fits into a short exact sequence
\[
0\rightarrow V_\bfff^+\rightarrow V_\bfff\rightarrow V_\bfff^-\rightarrow 0,
\]
where the quotient $V_\bfff^-$ is free of rank one over $\cR$, with the $G_{\Q_p}$-action given by the unramified character sending an arithmetic Frobenius $\sigma_p$ to $a_p(\bfff)$. 

Let
\[
(\bff,\bfg,\bfh)\in S^o(N_\varphi,\chi_\varphi,\cR_\varphi)\times S^o(N_g,\chi_g,\cR_g)\times S^o(N_h,\chi_h,\cR_h)
\]
be a triple of primitive Hida families with 
\begin{equation}\label{eq:a}
\textrm{$\chi_\varphi\chi_g\chi_h=\omega^{2a}$ for some $a\in\Z$,}
\end{equation} 
where $\omega$ is the Teichm\"uller character. Put 
\[
\mathcal{R}=\cR_\varphi\hat\otimes_{\mathscr{O}}\cR_g\hat\otimes_{\mathscr{O}}\cR_h,
\] 
which is a finite extension of the three-variable Iwasawa algebra $\Lambda\hat\otimes_{\mathscr{O}}\Lambda\hat\otimes_{\mathscr{O}}\Lambda$, 
and let
\begin{equation}
\begin{aligned}
\mathfrak{X}_{\mathcal{R}}^\bff:=\{(Q_0,Q_1,Q_2)&\in\mathfrak{X}_{\cR_\varphi}^+\times\mathfrak{X}_{\cR_g}^{\rm cls}\times\mathfrak{X}_{\cR_h}^{\rm cls}\\
&\quad\colon\textrm{$k_{Q_0}\geq k_{Q_1}+k_{Q_2}$ and $k_{Q_0}\equiv k_{Q_1}+k_{Q_2}\;({\rm mod}\;2)$}\}
\end{aligned}\nonumber
\end{equation}
be the weight space for $\mathcal{R}$ in the so-called \emph{$\bff$-unbalanced range}. 

Let $\mathbf{V}=V_\bff\hat\otimes_{\mathscr{O}}V_{\bfg}\hat\otimes_{\mathscr{O}}V_{\bfh}$ be the triple tensor product Galois representation attached to $(\bff,\bfg,\bfh)$, and writing $\det\mathbf{V}=\mathcal{X}^2\varepsilon_{\rm cyc}$ (as is possible by \eqref{eq:a}), set  
\begin{equation}\label{eq:crit-twist}
\Vdag:=\mathbf{V}\otimes\mathcal{X}^{-1},
\end{equation}
which is a self-dual twist of $\mathbf{V}$. Define the rank four $G_{\Q_p}$-invariant subspace $\mathscr{F}_p^\bff(\Vdag)\subset\Vdag$ by
\begin{equation}\label{eq:unb-intro}
\mathscr{F}_p^\bff(\Vdag):=V_{\bff}^+\hat\otimes_{\mathscr{O}}V_\bfg\hat\otimes_{\mathscr{O}}V_{\bfh}\otimes\mathcal{X}^{-1}.
\end{equation}
For every $\underline{Q}=(Q_0,Q_1,Q_2)\in\mathfrak{X}_{\mathcal{R}}^\bff$ we denote by $\mathscr{F}_p^\bff(\VQdag)\subset\VQdag$  the corresponding specialisation. Finally, for every rational prime $\ell$ denote by $\varepsilon_\ell(\VQdag)$ the epsilon factor attached to the restriction of $\VQdag$ to $G_{\Q_\ell}$ as in \cite[p.\,21]{tate-background}, and assume that 
\begin{equation}\label{eq:+1}
\textrm{for some $\underline{Q}\in\mathfrak{X}_{\mathcal{R}}^\bff$, we have $\varepsilon_\ell(\VQdag)=+1$ for all primes $\ell\mid N_\bff N_\bfg N_\bfh$.}
\end{equation}
As explained in \cite[\S{1.2}]{hsieh-triple}, it is known that condition (\ref{eq:+1}) is independent of $\underline{Q}$, and it implies that the sign in the functional equation for the triple product $L$-function 
\[
L(\VQdag,s)
\] 
(relating its values at $s$ and $-s$) is $+1$ for all $\underline{Q}\in\mathfrak{X}_{\mathcal{R}}^\bff$. 

For the next statement, we refer the reader to $\S\ref{subsec:congr}$ below for a review of the congruence ideal associated with a primitive Hida family.


\begin{thm}\label{thm:hsieh-triple}
Let $(\bff,\bfg,\bfh)$ be a triple of primitive Hida families as above satisfying conditions (\ref{eq:a}) and (\ref{eq:+1}). Assume in addition that:
\begin{itemize}
\item[(i)] $\gcd(N_\varphi,N_g,N_h)$ is square-free,
\item[(ii)] the residual representation $\bar{\rho}_{\bff}$ is absolutely irreducible and $p$-distinguished,
\end{itemize}
and fix a generator $\eta_{\bff}$ of the congruence ideal of $\bff$. Then there exists a unique element 
\[
\mathscr{L}_p^{\bff}(\bff,\bfg,\bfh)\in\mathcal{R}
\]
such that for all $\underline{Q}=(Q_0,Q_1,Q_2)\in\mathfrak{X}_{\mathcal{R}}^{\bff}$ of weight $(k_0,k_1,k_2)$ with $\epsilon_{Q_0}=1$ we have
\[
(\mathscr{L}_p^\bff(\bff,\bfg,\bfh)(\underline{Q}))^2=\Gamma_{\VQdag}(0)\cdot\frac{L(\VQdag,0)}{(\sqrt{-1})^{2k_{0}}\cdot\Omega_{\bff_{Q_0}}^2}\cdot\mathcal{E}_p(\mathscr{F}_p^{\bff}(\VQdag))\cdot\prod_{\ell\in\Sigma_{\rm exc}}(1+\ell^{-1})^2,
\]
where:
\begin{itemize}
\item $\Gamma_{\VQdag}(0)=\Gamma_{\bC}(c_{\underline{Q}})\Gamma_\bC(c_{\underline{Q}}+2-k_1-k_2)\Gamma_\bC(c_{\underline{Q}}+1-k_1)\Gamma_\bC(c_{\underline{Q}}+1-k_2)$, with 
\[
c_{\underline{Q}}=(k_0+k_1+k_2-2)/2
\] 
and $\Gamma_\bC(s)=2(2\pi)^{-s}\Gamma(s)$;
\item $\Omega_{\bff_{Q_0}}$ is the canonical period
\[
\Omega_{\bff_{Q_0}}:=(-2\sqrt{-1})^{k_0+1}\cdot\frac{\Vert\bff_{Q_0}^\circ\Vert_{\Gamma_0(N_{\varphi})}^2}{\imath_p(\eta_{\bff_{Q_0}})}\cdot\Bigl(1-\frac{\chi_{\varphi}'(p)p^{k_0-1}}{\alpha_{Q_0}^2}\Bigr)\Bigl(1-\frac{\chi_{\varphi}'(p)p^{k_0-2}}{\alpha_{Q_0}^2}\Bigr),
\]
with $\bff_{Q_0}^\circ\in S_{k_0}(N_{\varphi})$ the newform of conductor $N_\varphi$ associated with $\bff_{Q_0}$, $\chi_\varphi'$ the prime-to-$p$ part of $\chi_\varphi$, and $\alpha_{Q_0}$ the specialisation of $a_p(\bff)\in\cR_\varphi^\times$ at $Q_0$;
\item $\mathcal{E}_p(\mathscr{F}_p^{\bff}(\VQdag))$ is the modified $p$-Euler factor
\[
\mathcal{E}_p(\mathscr{F}_p^{\bff}(\VQdag)):=\frac{L_p(\mathscr{F}_p^{\bff}(\VQdag),0)}{\varepsilon_p(\mathscr{F}_p^{\bff}(\VQdag))\cdot L_p(\VQdag/\mathscr{F}_p^{\bff}(\VQdag),0)}\cdot\frac{1}{L_p(\VQdag,0)},
\]
\end{itemize}
and $\Sigma_{\rm exc}$ is an explicitly defined subset of the prime factors of $N_\varphi N_g N_h$, \cite[p.~416]{hsieh-triple}.
\end{thm}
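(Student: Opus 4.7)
The plan is to follow Hsieh's approach in \cite{hsieh-triple} and build $\mathcal{L}_p^\bff(\bff,\bfg,\bfh)$ as an $\bff$-isotypic $p$-adic Petersson pairing against a three-variable auxiliary form constructed from $\bfg$ and $\bfh$. The conceptual input is Ichino's trilinear period formula, which expresses the central critical value $L(\VQdag,0)$ as a product of explicit local zeta integrals and the square of a global trilinear period
\[
I(\bff_{Q_0}^\circ,\bfg_{Q_1}^\circ,\bfh_{Q_2}^\circ).
\]
In the $\bff$-unbalanced range this period can be rewritten, after applying the nearly-holomorphic Maass--Shimura operator $\delta_{k_2}^{(m)}$ of shift $m=(k_0-k_1-k_2)/2\geq 0$ to $\bfh_{Q_2}^\circ$ and passing to the holomorphic projection, as a classical Petersson product of $\bff_{Q_0}^\circ$ against $\bfg_{Q_1}\cdot\delta_{k_2}^{(m)}\bfh_{Q_2}^{[p]}$, where $\bfh^{[p]}$ denotes the $p$-depletion. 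The key observation is that this last expression $p$-adically interpolates: Hida's ordinary projector $e_{\rm ord}$ and the $\Lambda$-adic theta operator $d=q\frac{d}{dq}$ are defined on $\mathcal{R}$-adic ordinary forms, and on an arithmetic specialization of sufficient weight shift they recover $e_{\rm ord}$ and $\delta_{k}^{(m)}$ respectively.

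With this in mind, I would define
\[
\mathcal{L}_p^\bff(\bff,\bfg,\bfh):=\frac{\langle\bff^*,\,e_{\rm ord}(\bfg\cdot d^{\bullet}\bfh^{[p]})\rangle_{N,\bff}}{\eta_\bff}\cdot\prod_{\ell\in\Sigma_{\rm exc}}(1+\ell^{-1})\,\in\,\mathcal{R},
\]
where $\bff^*$ is a family of dual test vectors at levels prime to $p$, the pairing is the $\Lambda$-adic Petersson product on ordinary forms of tame level $N=\mathrm{lcm}(N_\varphi,N_g,N_h)$, the twist $d^{\bullet}$ is taken along the critical-twist direction dictated by the unbalanced weight condition, and division by the congruence generator $\eta_\bff$ is what turns the $\bff$-isotypic projection into an integral operator. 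Uniqueness with the stated interpolation is automatic from the Zariski density of $\mathfrak{X}_{\mathcal{R}}^\bff$ in $\mathrm{Spec}\,\mathcal{R}$.

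Verifying the interpolation formula then reduces to specializing the pairing at one $\underline{Q}\in\mathfrak{X}_{\mathcal{R}}^\bff$ with $\epsilon_{Q_0}=1$ and matching it to Ichino's formula term by term. The archimedean factor $\Gamma_{\VQdag}(0)$ arises from the archimedean zeta integral of the near-holomorphic shifted form against $\bff_{Q_0}^\circ$; the period $\Omega_{\bff_{Q_0}}^2$ together with its two Euler-factor correction terms comes from converting the $\Lambda$-adic Petersson pairing normalised by $\eta_\bff$ into an algebraic pairing on $\bff_{Q_0}$ and then undoing the ordinary $p$-stabilization; the modified $p$-Euler factor $\mathcal{E}_p(\mathscr{F}_p^\bff(\VQdag))$ comes from the local zeta integral at $p$, the $p$-depletion of $\bfh$ ensuring that only the $\mathscr{F}_p^\bff$-quotient contributes; and the factor $\prod_{\ell\in\Sigma_{\rm exc}}(1+\ell^{-1})^2$, together with trivial local factors at the remaining bad primes, comes from the explicit choice of local test vectors. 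Hypothesis (\ref{eq:+1}) ensures every local sign is $+1$, so the local trilinear forms are non-trivial on the chosen vectors, while the square-freeness of $\gcd(N_\varphi,N_g,N_h)$ is what allows a compatible family of bad-place test vectors to be picked simultaneously across the three Hida families.

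The hardest step is the local computation at the bad primes and at the archimedean place: one must exhibit test vectors fitting into a single $\mathcal{R}$-adic family \emph{and} pair non-trivially under each local trilinear form, and then evaluate the resulting local zeta integrals explicitly to match the stated $\Sigma_{\rm exc}$ correction and gamma factors. Integrality of $\mathcal{L}_p^\bff(\bff,\bfg,\bfh)$ in $\mathcal{R}$, as opposed to merely in its total ring of fractions, is where the hypotheses that $\bar\rho_\bff$ is absolutely irreducible and $p$-distinguished enter: they guarantee that $\eta_\bff$ generates the true congruence ideal of $\bff$, so that dividing by it corresponds to a well-defined \emph{integral} projector onto the $\bff$-isotypic line in the space of $\Lambda$-adic ordinary cusp forms.
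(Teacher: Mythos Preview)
Your sketch is a faithful outline of Hsieh's construction in \cite{hsieh-triple}, and that is precisely what the paper invokes: its proof of this theorem is a one-line citation to \cite[Thm.~A]{hsieh-triple}, with no additional argument. So you have in fact written considerably more than the paper does at this point, and what you have written correctly summarises the strategy of the cited reference (Ichino's formula, $\Lambda$-adic Petersson pairing against $e_{\rm ord}(\bfg\cdot d^\bullet\bfh^{[p]})$, division by the congruence number, and the local zeta-integral computations yielding the archimedean, $p$-adic, and $\Sigma_{\rm exc}$ factors).
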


\begin{proof}
This is \cite[Thm.~A]{hsieh-triple}. 
\end{proof}

\begin{rem}
For simplicity,  
we have stated the interpolation property of $\mathscr{L}_p^\bff(\bff,\bfg,\bfh)$ only for $\underline{Q}$ with $\epsilon_{Q_0}=1$, as this will suffice for our purposes; see \cite[Thm.~A]{hsieh-triple} for the interpolation property for all $\underline{Q}\in\mathfrak{X}_{\mathcal{R}}^\bff$.
\end{rem}

\subsection{Congruence ideal}\label{subsec:congr}

Let $\bfff\in S^o(N_f,\chi_f,\cR)$ be a primitive Hida family defined over $\cR$. Associated with $\bfff$ there is a $\cR$-algebra homomorphism 
\[
\lambda_{\bfff}:\bT(N_f,\cR)\rightarrow\cR
\] 
where $\bT(N_f,\cR)$ is the Hecke algebra acting on $\oplus_\chi S^o(N_f,\chi,\cR)$, with $\chi$ running over the characters of $(\Z/pN_f\Z)^\times$. Let $\bT_{\mathfrak{m}}$ be the local component of $\bT(N_f,\cR)$ through which $\lambda_{\bfff}$ factors, and following \cite{hida-AJM-modules} define the \emph{congruence ideal} $C(\bfff)$ of $\bfff$ by
\[
C(\bfff):=\lambda_{\bfff}({\rm Ann}_{\bT_\mathfrak{m}}({\rm ker}\,\lambda_\bfff))\subset\cR.
\] 
When the residual representation $\bar{\rho}_{\bfff}$ is absolutely  irreducible and $p$-distinguished, it follows from the results of \cite{Fermat-Wiles} and \cite{hida-AJM-modules} that $C(\bfff)$ is generated by a nonzero element $\eta_{\bfff}\in\cR$.

\section{Factorisation of $p$-adic triple product $L$-function}

In this section we relate the triple product $p$-adic $L$-function attached to triples of Hida families with CM by $K$ to a product of anticyclotomic Katz $p$-adic $L$-functions. 

\subsection{Hida families with CM}\label{subsec:CM-hida}

We review the construction of CM Hida families following the exposition in \cite[\S{8.1}]{hsieh-triple}. 
Since it will suffice for our purposes, we assume that  the class number $h_K=\vert{\rm Pic}(\cO_K)\vert$ of $K$ is coprime to $p$. Let $K_\infty$ be the unique $\Z_p^2$-extension of $K$, and denote by $K_{\pp^\infty}$ the maximal subfield of $K_\infty$ unramified outside $\pp$. Put 
\[
\Gamma_\infty:={\rm Gal}(K_\infty/K)\simeq\Z_p^2,\quad\quad
\Gamma_{\pp^\infty}:={\rm Gal}(K_{\pp^\infty}/K)\simeq\Z_p.
\]

Recall that for every ideal $\mathfrak{C}\subset\cO_K$ we let $K(\mathfrak{C})$ be the ray class field of $K$ of conductor $\mathfrak{C}$. Thus $K_{\pp^\infty}$ is also the maximal $\Z_p$-extension inside $K(\pp^\infty)$. By our assumption on $h_K$, the restriction of the 
Artin map to $K_\pp^\times$ induces an isomorphism $1+p\Z_p\simeq\Gamma_{\pp^\infty}$, where we identified $\Z_p^\times$ and $\cO_{K_\pp}^\times$ by the embedding $\iota_p$. Denote by $\gamma_\pp$ the topological generator of $\Gamma_{\pp^\infty}$ corresponding to $1+p$ under this isomorphism, and for each variable $S$ let $\Psi_S:\Gamma_\infty\rightarrow\Z_p\dBr{S}^\times$ be the universal character 
given by
\begin{equation}\label{eq:univ-Psi}
\Psi_S(\sigma)=(1+S)^{l(\sigma)},
\end{equation}
where $l(\sigma)\in\Z_p$ is such that $\sigma\vert_{K_{\pp^\infty}}=\gamma_\pp^{l(\sigma)}$. Fix $\mathfrak{C}$ coprime to $p$, and for any finite order character $\xi:G_K\rightarrow\mathscr{O}^\times$ of conductor $\mathfrak{C}\pp^r$, $r\geq 0$, put
\begin{equation}\label{eq:def-CM}
\boldsymbol{\theta}_\xi(S)(q)=\sum_{(\fa,\pp\mathfrak{C})=1}\xi(\sigma_{\fa})\Psi^{-1}_{\mathbf{u}^{-1}(1+S)-1}(\sigma_\fa)q^{\mathbf{N}(\fa)}\in\mathscr{O}\dBr{S}\dBr{q},
\end{equation}
where $\mathbf{u}=1+p$ and $\sigma_\fa\in{\rm Gal}(K(\mathfrak{C}\pp^\infty)/K)$ is the Artin symbol of $\mathfrak{a}$. 
Then $\boldsymbol{\theta}_\xi(S)$ is a primitive Hida family %
(called the primitive \emph{CM Hida family} with branch character $\xi$) defined over $\mathscr{O}\dBr{S}$ of level $D_K\mathbf{N}(\mathfrak{C})$  and tame character $(\xi\circ\mathscr{V})\eta_{K/\Q}\omega^{-1}$, where 
\[
\mathscr{V}:G_\Q^{\rm ab}\rightarrow G_K^{\rm ab}
\] 
is the transfer map and $\eta_{K/\Q}$ is the quadratic character associated to $K/\Q$.

\subsection{Congruence ideal of CM Hida families}\label{subsec:congr-CM}

Let $\bff=\boldsymbol{\theta}_{\xi}(S)$ be a primitive CM Hida family defined over $\mathscr{O}\dBr{S}$. In this section we recall the characterisation of the congruence ideal of $\bff$ that follows from its relation with the anticyclotomic main conjecture for the underlying imaginary quadratic field $K$. 

Let $N_\varphi=D_K\mathbf{N}(\mathfrak{C})$ be the tame conductor of $\bff$. Assume that $\mathfrak{C}$ is coprime to $p$ and fixed under the action of complex conjugation, so the non-trivial automorphism $\cc\in{\rm Aut}(K/\Q)$ acts on $Z(\mathfrak{C})={\rm Gal}(K(\mathfrak{C}p^\infty)/K)$. Let $\Delta_{\mathfrak{C}}$ be the torsion subgroup of $Z(\mathfrak{C})$, and put $\Gamma_K:=Z(\mathfrak{C})/\Delta_{\mathfrak{C}}\simeq\Z_p^2$, which is identified with the Galois group of the unique $\Z_p^2$-extension $K_\infty/K$. Fix a decomposition
\begin{equation}\label{eq:dec-Gamma}
Z(\mathfrak{C})\simeq\Delta_{\mathfrak{C}}\times\Gamma_K,
\end{equation}
and note that $\Delta_\mathfrak{C}$ has order prime-to-$p$ by our assumption that $p\nmid h_K$.

Let $Z(\mathfrak{C})^+$ 
be the maximal subgroup of $Z(\mathfrak{C})$ 
fixed by the action of $\cc$. Put $Z(\mathfrak{C})^-=Z(\mathfrak{C})/Z(\mathfrak{C})^+$, and denote by $\pi:Z(\mathfrak{C})\rightarrow Z(\mathfrak{C})^-$ the natural projection. We also have a decomposition (which we fix to be compatible with \eqref{eq:dec-Gamma} under $\pi$) 
\[
Z(\mathfrak{C})^-\simeq\Delta_{\mathfrak{C}}^-\times\Gamma^-,
\]
where $\Delta_{\mathfrak{C}}^-$ is the torsion subgroup of $\Delta_{\mathfrak{C}}$ and $\Gamma^-$ is the eigenspace of $\Gamma_K$ where $\cc$ acts as inversion.  

Let $\mathbb{F}$ be the residue field of $\mathscr{O}$, and denote by $\bar{\xi}:G_K\rightarrow\mathbb{F}^\times$ the reduction of $\xi$. Then from \eqref{eq:def-CM} we see that the residual representation $\bar\rho_{\bff}$ satisfies 
\begin{equation}\label{eq:residual-CM}
\bar\rho_{\bff}\simeq{\rm Ind}_K^\bQ\bar{\xi}^{-1}
\end{equation} 
Put $\bar{\xi}^-=\bar{\xi}^{\cc-1}$, which we shall view as being $\mathscr{O}$-valued by the Teichm\"uller lift.  
Since $\bar{\xi}^-$ has order prime to $p$, its composition with $\pi$ factors through $\Delta_{\mathfrak{C}}$, but its prime-to-$p$ conductor may be a proper divisor of $\mathfrak{C}$. Let $\mathfrak{c}$ be the prime-to-$p$ conductor of $\bar{\xi}^-$ viewed as a character of $\Delta_{\mathfrak{C}}$ via $\pi$ (i.e., $\mathfrak{c}$ is the maximal divisor of $\mathfrak{C}$ such that $\bar{\xi}^-\pi$ factors through the quotient map $\Delta_{\mathfrak{C}}\twoheadrightarrow\Delta_{\mathfrak{c}}$), put $\Gamma_K^{\cc-1}=\{\gamma^{\cc-1}\colon\gamma\in\Gamma_K\}$, and (upon enlarging $\mathcal{W}$ if necessary) let $\mathcal{L}_{\pp,\bar{\xi}^-}^{(\cc-1)}\in\cW\dBr{\Gamma_K^{\cc-1}}$ denote the image of the Katz $p$-adc $L$-function $\mathcal{L}_{\pp,\mathfrak{c}}$ under the composition
\[
\cW\dBr{Z(\mathfrak{c})}\rightarrow\cW\dBr{\Gamma_K}\rightarrow\cW\dBr{\Gamma_K^{\cc-1}},
\]
where the first arrow is the projection defined by $\psi^-$ (viewed as a primitive character on $\Delta_{\mathfrak{c}}$) and the second arrow is given by $\gamma\mapsto\gamma^{\cc-1}$ for $\gamma\in\Gamma_K$. Since $p$ is odd, the map $\gamma^{\cc-1}\mapsto(\gamma\vert_{K_{\pp^\infty}})^{1/2}$ yields an isomorphism $\Gamma_K^{\cc-1}\simeq\Gamma_{\pp^\infty}$. Upon choosing a topological generator $\gamma_\pp\in\Gamma_{\pp^\infty}$ as in $\S\ref{subsec:CM-hida}$, we shall thus view $\mathcal{L}_{\pp,\bar{\xi}^-}^{(\cc-1)}$ as an element in $\cW\dBr{S}$.

\begin{lem}\label{lem:HT}
Let $\bff=\boldsymbol{\theta}_{\xi}(S)$ be a primitive CM Hida family as in \eqref{eq:def-CM}.  
Suppose that
$\bar{\xi}^-\vert_{G_{K_\pp}}\neq 1$,  
where $G_{K_\pp}\subset G_K$ is a decomposition group at $\pp$. Then the congruence ideal $C(\bff)$ is generated by $\mathcal{L}_{\pp,\bar{\xi}^-}^{(\cc-1)}$ in $\cW\dBr{S}\otimes_{\Z_p}\Q_p$.
\end{lem}

\begin{proof}
The assumption on $\bar\xi^-$ implies that $\bar{\rho}_{\bff}$ is absolutely irreducible and $p$-distinguished, and so the congruence ideal $C(\bff)\subset\mathscr{O}\dBr{S}$ is known to be principal. Letting $\eta_\bff\in\cO\dBr{S}$ be a generator, from \cite[Thm.\,I]{HT-ENS} and \cite[Thm.\,0.3]{HT-117}  we have the divisibilities 
\begin{equation}\label{eq:HT}
\mathcal{L}_{\pp,\bar{\xi}^-}^{(\cc-1)}\mid\eta_\bff\mid\mathcal{F}_{\pp,\bar{\xi}^-}^{(\cc-1)}
\end{equation}
in $\cW\dBr{S}\otimes_{\Z_p}\Q_p$ and $\mathscr{O}\dBr{S}\otimes_{\bZ_p}\bQ_p$, respectively, where $\mathcal{F}_{\pp,\bar{\xi}^-}^{(\cc-1)}\in\mathscr{O}\dBr{S}$ is a characteristic power series for the Pontryagin dual of the $\bar{\xi}^-$-isotypic component of ${\rm Gal}(M_\infty^-/K_{\mathfrak{c},\infty}^-)$, the Galois group of the maximal pro-$p$ abelian extension of $K_{\mathfrak{c},\infty}^-:=K(\mathfrak{c}p^\infty)^{Z(\mathfrak{c})^+}$ unramified outside $\pp$. (For the second divisibility in \eqref{eq:HT}, note that the local non-triviality of $\bar{\xi}^-$ implies that ${\rm ord}_P(\mathcal{L}_{\pp,\bar{\xi}^-}^{(\cc-1)})=0$ for all ``trivial zero primes'' $P\subset\cW\dBr{S}$ in the sense of \cite{HT-117}.) Since Rubin's proof \cite{rubin-IMC} of the main conjecture for $K$  yields  $(\mathcal{F}_{\pp,\bar{\xi}^-}^{(\cc-1)})=(\mathcal{L}_{\pp,\bar{\xi}^-}^{(\cc-1)})$, together with \eqref{eq:HT}  
the result follows.
%
\end{proof}

\begin{rem}
As explained in \cite[\S{4.5}]{ACR2}, under some further hypotheses on the branch character $\bar{\xi}^-$, it follows from Hida's proof \cite{hida-coates} on the anticyclotomic main conjecture for $K$, refining the results  of  \cite{HT-ENS,HT-117}, that $C(\bff)$ is \emph{integrally} generated by $h_K\cdot\mathcal{L}_{\pp,\bar{\xi}^-}^{(\cc-1)}$, where $h_K$ is the class number of $K$. 
\end{rem}

\subsection{Proof of the factorisation}\label{subsec:factor-L}

Now suppose $(\lambda_0,\lambda_1,\lambda_2)$ are Hecke characters of $K$ with:
\begin{itemize}
\item $\lambda_0$ of infinity type $(-1,0)$ and conductor $\mathfrak{C}_0$,
\item $\lambda_1,\lambda_2$ ray class characters of conductor $\mathfrak{C}_1,\mathfrak{C}_2$.
\end{itemize}
We assume that $\mathfrak{C}_i$ is coprime to $p$ for $i=0,1,2$, and that the central character $\chi_{\lambda_i}$ are such that
\begin{equation}\label{eq:sd-epsK}
\chi_{\lambda_0}\chi_{\lambda_1}\chi_{\lambda_2}=\eta_{K/\Q}.
\end{equation}
This condition implies that each of the products 
\begin{equation}\label{eq:sd-lambda}
\lambda_0^{-1}\lambda_1^{-1}\lambda_2^{-1},\quad \lambda_0^{-1}\lambda_1^{-\cc}\lambda_2^{-\cc},\quad 
\lambda_0^{-1}\lambda_1^{-\cc}\lambda_2^{-\cc},\quad 
\lambda_0^{-1}\lambda_1^{-\cc}\lambda_2^{-1}
\end{equation}
is a \emph{self-dual} character, where we say that $\phi$ is self-dual if $\phi^\cc=\phi^{-1}\mathbf{N}^{-1}$ and $\chi_\phi=\eta_{K/\bQ}$ (see also \cite[Rem.~3.7]{bdp3} for the above claim). For such $\phi$, the Hecke $L$-function $L(\phi,s)$ is self-dual, with a functional equation relating its values at $s$ and $-s$.

Note that the specialization of the character $\Psi_S$ of \eqref{eq:univ-Psi} to $S=\mathbf{u}-1$ descends to an isomorphism
\[
\Psi_{\mathbf{u}-1}:\Gamma_{\pp^\infty}\xrightarrow{\sim}1+p\Z_p,
\]
which we shall see as the $p$-adic avatar of a Hecke character $\psi_0$ of infinity type $(1,0)$ and conductor $\pp$. The character $\lambda_0$ can be written as 
\[
\lambda_0=\xi_0\psi_0^{-1}
\]
with $\xi_0$ a ray class character of $K$ of conductor dividing $\mathfrak{C}_0\pp$, and with this we put
\[
\boldsymbol{\theta}_{\lambda_0}(S_0):=\boldsymbol{\theta}_{\xi_0}(\mathbf{u}^{-1}(1+S_0)-1)\in\scrO\dBr{S_0}\dBr{q},
\]
and consider the triple of CM Hida families
\[
(\bff,\bfg,\bfh)=(\boldsymbol{\theta}_{\lambda_0}(S_0),\boldsymbol{\theta}_{\lambda_1}(S_1),\boldsymbol{\theta}_{\lambda_2}(S_2)),
\] 
which satisfies conditions \eqref{eq:a} and \eqref{eq:+1}. By construction, the specialisation of $\bff$ at the weight $2$ arithmetic point $Q_0\in\mathfrak{X}_{\mathscr{O}\dBr{S_0}}^+$ given by $S_0=\mathbf{u}^2-1$ gives the ordinary $p$-stabilisation $\varphi$ of $\theta(\psi_0)$ with $U_p$-eigenvalue $\lambda_0(\ppbar)$.
%
Let
\begin{equation}\label{eq:sp}
\mathcal{R}\simeq\mathscr{O}\dBr{S_0,S_1,S_2}\rightarrow\mathscr{O}\dBr{S_1,S_2}
\end{equation}
be the specialisation map at $Q_0$, and denote by
\[
\mathscr{L}_{p}^\varphi(\varphi,\bfg,\bfh)(S_1,S_2)\in\mathscr{O}\dBr{S_1,S_2}
\] 
the image of the triple product $p$-adic $L$-function $\mathscr{L}_{p}^{\bff}(\bff,\bfg,\bfh)\in\mathcal{R}$ of Theorem~\ref{thm:hsieh-triple} under this map. On the other hand, for $\chi$ a Hecke character of $K$ of prime-to-$p$ conductor $\mathfrak{C}$, we denote by $\mathcal{L}_{\pp,\chi}^-$ the image of ${\rm Tw}_{\chi^{-1}}(\mathcal{L}_{\pp,\mathfrak{C}})$ under the natural projection $\cW\dBr{Z(\mathfrak{C})}\rightarrow\cW\dBr{\Gamma^-}$, where 
\[
{\rm Tw}_{\chi^{-1}}:\cW\dBr{Z(\mathfrak{C})}\rightarrow\cW\dBr{Z(\mathfrak{C})}
\]
is the $\cW$-linear isomorphism given by $\gamma\mapsto\chi^{-1}(\gamma)\gamma$ for $\gamma\in Z(\mathfrak{C})$. As usual, upon choosing a topological generator $\gamma^-\in\Gamma^-$, we shall identify $\cW\dBr{\Gamma^-}$ with the power series ring $\cW\dBr{W}$ via $\gamma^-\mapsto 1+W$. Finally, denote by $\lambda\mapsto\lambda^\iota$ the involution of $\cW\dBr{\Gamma^-}$ given by $\gamma\mapsto\gamma^{-1}$ for $\gamma\in\Gamma^-$.

\begin{prop}\label{prop:factor-L}
Let $(\varphi,\bfg,\bfh)=(\theta(\lambda_0),\boldsymbol{\theta}_{\lambda_1}(S_1),\boldsymbol{\theta}_{\lambda_2}(S_2))$ be a CM triple as above, and suppose in addition that $\lambda_0(\pp)\not\equiv\lambda_0(\ppbar)\pmod{p}$, and put
\[
W_1=\mathbf{u}^{-1}(1+S_1)^{1/2}(1+S_2)^{1/2}-1,\quad
W_2=(1+S_1)^{1/2}(1+S_2)^{-1/2}-1.
\]
Then we have the factorisation
\begin{align*}
\mathscr{L}_{p}^{\varphi}(\varphi,\bfg,\bfh)^2(S_1,S_2)&=\mathbf{w}\cdot\cL^-_{\pp,\lambda_0\lambda_1\lambda_2}(W_1)^\iota\cdot\cL_{\pp,\lambda_0\lambda_1^{\cc}\lambda_2^{\cc}}^-(W_1)\\
&\quad\times\cL_{\pp,\lambda_0\lambda_1\lambda_2^{\cc}}^-(W_2)^\iota\cdot\cL_{\pp,\lambda_0\lambda_1^{\cc}\lambda_2}^-(W_2),
\end{align*}
where $\mathbf{w}\in\cW[1/p]^\times$.
\end{prop}

\begin{proof}
%
We begin by noting that the assumption  $\lambda_0(\pp)\not\equiv\lambda_0(\ppbar)\pmod{p}$ implies that the residual representation $\bar{\rho}_{\bff}$ is absolutely irreducible and $p$-distinguished; so condition (ii) in Theorem~\ref{thm:hsieh-triple} holds. Moreover, since the local representations associated to $\bfg,\bfh$ at the ramified primes are principal series, the calculation of local zeta integrals in \cite[\S{6}]{hsieh-triple} apply to our case, without the need for hypothesis (i) in  Theorem~\ref{thm:hsieh-triple} (we thank the referee for pointing out to us the possibility of dispensing with hypothesis (i) in our setting; see \cite[\S{6.1}]{hsieh-triple} for its use in the general case). Thus we know that $\mathscr{L}_p^\varphi(\varphi,\bfg,\bfh)$ satisfies the interpolation formula of Theorem~\ref{thm:hsieh-triple}, and by Lemma~\ref{lem:HT} (noting that $\lambda_0\equiv\xi_0\pmod{p}$), we can take $\mathcal{L}_{\pp,\bar{\xi}^-}^{(\cc-1)}$ as a generator of $C(\bff)$ over $\cW\dBr{S_0}\otimes_{\Z_p}\Q_p$ in our normalisation $\mathscr{L}_p^{\bff}(\bff,\bfg,\bfh)$.

Now, for $i=1,2$, let $\zeta_i$ be a primitive $p^{n_i}$-th root of unity with $n_i>0$, and put $Q_1=\zeta_1\zeta_2\mathbf{u}-1$, $Q_2=\zeta_1\zeta_2^{-1}\mathbf{u}-1$, so the specialisations $\bfg_{Q_1},  \bfh_{Q_2}$ are both CM forms of weight $1$. 
Let $\epsilon_i:\Gamma^-\rightarrow\mu_{p^\infty}$ be the finite order character given by $\epsilon_i(\gamma^-)=\zeta_i$ and put $\underline{Q}=(Q_0,Q_1,Q_2)$. Noting that for the above triple $(\varphi,\bfg,\bfh)$ the character $\mathcal{X}$ in \eqref{eq:crit-twist} is given by 
\[
\mathcal{X}=(\Psi_{T_1}^{1/2}\Psi_{T_2}^{1/2}\circ\mathscr{V}):G_\Q\rightarrow\Z_p\dBr{S_1,S_2}^\times,
\]
where $T_i=\mathbf{u}^{-1}(1+S_i)-1$, we find the decomposition of $p$-adic representations
\begin{align*}
\VQdag\otimes\Q_p&=
\Ind_K^\Q(\lambda_0^{-1}\lambda_1^{-1}\lambda_2^{-1}\epsilon_1^{-1})\oplus\Ind_K^\Q(\lambda_0^{-1}\lambda_1^{-\cc}\lambda_2^{-\cc}\epsilon_1)\\
&\quad\oplus\Ind_K^\Q(\lambda_0^{-1}\lambda_1^{-1}\lambda_2^{-\cc}\epsilon_2^{-1})
\oplus\Ind_K^{\Q}(\lambda_0^{-1}\lambda_1^{-\cc}\lambda_2^{-1}\epsilon_2);
\end{align*}
\begin{align*}
\mathscr{F}_p^{\varphi}(\VQdag)\otimes\Q_p
&=\lambda_{0,\pp}^{-1}\lambda_{1,\pp}^{-1}\lambda_{2,\pp}^{-1}\epsilon_{1,\pp}^{-1}\oplus\lambda_{0,\pp}^{-1}\lambda_{1,\pp}^{-\cc}\lambda_{2,\pp}^{-\cc}\epsilon_{1,\pp}\\
&\quad\oplus\lambda_{0,\pp}^{-1}\lambda_{1,\pp}^{-1}\lambda_{2,\pp}^{-\cc}\epsilon_{2,\pp}^{-1}\oplus\lambda_{0,\pp}^{-1}\lambda_{1,\pp}^{-\cc}\lambda_{2,\pp}^{-1}\epsilon_{2,\pp}.
\end{align*}
Thus the terms appearing in the interpolation formula of Theorem~\ref{thm:hsieh-triple} become:
\begin{equation}\label{eq:explicit}
\begin{aligned}
\Gamma_{\VQdag}(0)\cdot L(\VQdag,0)&=\pi^{-4}\cdot L(\lambda_0^{-1}\lambda_1^{-1}\lambda_2^{-1}\epsilon_1^{-1},0)\cdot L(\lambda_0^{-1}\lambda_1^{-\cc}\lambda_2^{-\cc}\epsilon_1,0)\\
&\quad\times L(\lambda_0^{-1}\lambda_1^{-1}\lambda_2^{-\cc}\epsilon_2^{-1},0)\cdot L(\lambda_0^{-1}\lambda_1^{-\cc}\lambda_2^{-1}\epsilon_2,0);
\end{aligned}
\end{equation}
\begin{equation}
\begin{aligned}\label{eq:Ep}
\mathcal{E}_p(\mathscr{F}_p^{\varphi}(\VQdag))&=
\mathcal{E}_\pp(\lambda_0^{-1}\lambda_1^{-1}\lambda_2^{-1}\epsilon_1^{-1})\cdot\mathcal{E}_\pp(\lambda_0^{-1}\lambda_1^{-\cc}\lambda_2^{-\cc}\epsilon_1)\\
&\quad\times\mathcal{E}_\pp(\lambda_0^{-1}\lambda_1^{-1}\lambda_2^{-\cc}\epsilon_2^{-1})\cdot\mathcal{E}_\pp(\lambda_0^{-1}\lambda_1^{-\cc}\lambda_2^{-1}\epsilon_2);
\end{aligned}
\end{equation}
\begin{equation}\label{eq:period}
\Omega_{\bff_{Q_0}}=(-2\sqrt{-1})^3\cdot\frac{\Vert\varphi\Vert_{\Gamma_0(N_\varphi)}^2}{\imath_p(\eta_{\bff_{Q_0}})}\cdot\biggl(1-\frac{\lambda_0(\pp)}{\lambda_0(\bar\pp)}\biggr)\biggl(1-\frac{\lambda_0(\pp)}{p\lambda_0(\bar\pp)}\biggr),
\end{equation}
and we note that we may ignore the term $\prod_{q\in\Sigma_{\rm exc}}(1+q^{-1})^2$ from Theorem~\ref{thm:hsieh-triple} since it only contributes a fixed power of $p$ independent of $Q_1,Q_2$.

On the other hand, from Hida's formula for the adjoint $L$-value \cite[Thm.~7.1]{HT-ENS} and Dirichlet's class number formula we obtain 
\[
\Vert\varphi\Vert^2_{\Gamma_0(N_\varphi)}=\frac{D_K^2}{2^4\pi^3}\cdot\frac{2\pi h_K}{w_K\sqrt{D_K}}\cdot L(\lambda_0\lambda_0^{-\cc},1).
\]
Noting that $L(\lambda_0\lambda_0^{-\cc},1)=L(\lambda_0^\cc\lambda_0^{-1},1)=L(\lambda_0^\cc\lambda_0^{-1}\mathbf{N}^{-1},0)$ and that $\lambda_0^\cc\lambda_0^{-1}\mathbf{N}^{-1}$ has infinity type $(2,0)$, and letting $\mathfrak{c}$ denote the prime-to-$\pp$ conductor of $\lambda_0^{\cc-1}$, the interpolation property in Theorem~\ref{thm:katz} can thus be rewritten as
\begin{align*}
\cL_{\pp,\mathfrak{c}}(\lambda_0^\cc\lambda_0^{-1}\mathbf{N}^{-1})&
=\biggl(\frac{\Omega_p}{\Omega_\infty}\biggr)^2\cdot\frac{2^3\pi^2}{\sqrt{D_K}^{3}}\\
&\quad\times\biggl(1-\frac{\lambda_0(\pp)}{\lambda_0(\bar\pp)}\biggr)\biggl(1-\frac{\lambda_0(\pp)}{p\lambda_0(\bar\pp)}\biggr)\cdot\frac{w_K}{h_K}\cdot\Vert\varphi\Vert_{\Gamma_0(N_\varphi)}^2\\
&=-\biggl(\frac{\Omega_p}{\Omega_\infty}\biggr)^2\cdot\frac{\pi^2\sqrt{-1}}{\sqrt{D_K}^3}\cdot\Omega_{\bff_{Q_0}}\cdot\eta_{\bff_{Q_0}}\cdot\frac{w_K}{h_K},
\end{align*}
using (\ref{eq:period}) for the second equality. By the functional equation for Katz's $p$-adic $L$-function (see Theorem~\ref{thm:katz})  
together with Lemma~\ref{lem:HT}, this shows that
\begin{equation}\label{eq:Omega-bis}
\frac{1}{\Omega_{\bff_{Q_0}}^2}=\mathbf{w}'\biggl(\frac{\Omega_p}{\Omega_\infty}\biggr)^4\cdot\frac{\pi^4}{D_K^{3}},
\end{equation}
where  
$\mathbf{w}'\in\cW[1/p]^\times$ is independent of $Q_1, Q_2$. Thus substituting $(\ref{eq:explicit}), (\ref{eq:Ep}), (\ref{eq:period})$ and $(\ref{eq:Omega-bis})$ into the interpolation formula for $\mathscr{L}_p^\varphi(\varphi,\bfg,\bfh)$ we thus arrive at
\begin{align*}
\mathscr{L}_p^\varphi(\varphi,\bfg,\bfh)^2(\zeta_1\zeta_2\mathbf{u}-1,\zeta_1\zeta_2^{-1}\mathbf{u}-1)&=\mathbf{w}'D_K^{-3}\cdot\cL_{\pp,\lambda_0\lambda_1\lambda_2}^-(\zeta_1^{-1}-1)\cdot\cL_{\pp,\lambda_0\lambda_1^{\cc}\lambda_2^{\cc}}^-(\zeta_1-1)\\
&\quad\times\cL_{\pp,\lambda_0\lambda_1\lambda_2^{\cc}}^-(\zeta_2^{-1}-1)\cdot\cL_{\pp,\lambda_0\lambda_1^{\cc}\lambda_2}^-(\zeta_2-1),
\end{align*}
for all non-trivial $p$-power roots of unity $\zeta_1, \zeta_2$,  and the result follows.
\end{proof}

\section{Selmer group decompositions}

In this section we define different Selmer groups attached to Hecke characters and triple products of modular forms, and 
 prove a decomposition mirroring the factorisation in Proposition~\ref{prop:factor-L}.

\subsection{Selmer groups for Hecke characters}\label{subsec:Sel-char}

Let $\nu$ be a Hecke character of $K$ with values in the ring of integers $\mathscr{O}$ of a finite extension $\Phi$ of $\Q_p$. Denote by $\mathscr{O}_{\nu}$ the free $\mathscr{O}$-module of rank $1$ on which $G_K$ acts via $\nu^{-1}$ 
and put 
\[
T_\nu=\mathscr{O}_\nu,\quad V_\nu=T_\nu\otimes_{\mathscr{O}}\Phi,
\quad A_\nu=V_\nu/T_\nu=T_\nu\otimes_\mathscr{O}(\Phi/\mathscr{O}).
\]
Let $\Sigma$ be any finite set of places of $K$ containing $\infty$ and the primes dividing $p$ or the conductor of $\nu$, and for any finite extension $F/K$ denote by $G_{F,\Sigma}$ the Galois group of the maximal extension of $F$ unramified outside the places above $\Sigma$.

\begin{defn}
Let $F/K$ be a finite extension, and for $v$ a prime of $F$ above $p$ put
\[
\rH^1_{\emptyset}(F_v,V_\nu)=\rH^1(F_v,V_\nu),\quad\rH^1_{0}(F_v,V_\nu)=\{0\}.
\]
For $(\cL_{\pp},\cL_{\ppbar})\in\{\emptyset,0\}^{\oplus 2}$ define the Selmer group ${\rm Sel}_{\cL_{\pp},\cL_{\ppbar}}(F,V_\nu)$ by
\begin{align*}
{\rm Sel}_{\cL_{\pp},\cL_{\ppbar}}(F,V_\nu)=
\ker\biggl(\rH^1(G_{F,\Sigma},V_\nu)&\rightarrow \prod_{v\vert\pp}\frac{\rH^1(F_v,V_\nu)}{\rH^1_{\cL_\pp}(F_v,V_\nu)}\times
\prod_{v\vert\ppbar}\frac{\rH^1(F_v,V_\nu)}{\rH^1_{\cL_{\ppbar}}(F_v,V_\nu)}\\
&\quad\times
\prod_{v\in\Sigma, v\nmid p\infty}\rH^1(F_v,V_\nu)\biggr).
\end{align*}
\end{defn}

\begin{rem}
In particular, if $\nu$ has infinity type $(-1,0)$, the Selmer group
\[
{\rm Sel}_{\emptyset,0}(F,V_\nu)=\ker\biggl(\rH^1(G_{F,\Sigma},V_\nu)\rightarrow 
\prod_{v\vert\ppbar}\rH^1(F_v,V_\nu)\times
\prod_{v\in\Sigma, v\nmid p\infty}\rH^1(F_v,V_\nu)\biggr)
\]
agrees with the Bloch--Kato Selmer group of $V_\nu$ (see e.g.  \cite[\S{1.1}]{AHsplit} or \cite[\S{1.2}]{Arnold}). 
(Therefore in that case ${\rm Sel}_{0,\emptyset}(F,V_{\nu^\cc})$ agrees with the Bloch--Kato Selmer group of $V_{\nu^\cc}$.) 
\end{rem}

For $\any\in\{\emptyset,0\}$, the local condition $\rH^1_\any(F_v,T_\nu)$ and $\rH^1_\any(F_v,A_\nu)$ are defined from the above by propagation, and using these we define ${\rm Sel}_{\cL_\pp,\cL_{\ppbar}}(F,T_\nu)$ and ${\rm Sel}_{\cL_\pp,\cL_{\ppbar}}(F,A_\nu)$ by the same recipe as before. Then, letting $K_\infty/K$ denote the anticyclotomic $\Z_p$-extension of $K$, we set
\begin{equation}\label{eq:Iw-Sel}
\begin{aligned}
{\rm Sel}_{{\cL_\pp,\cL_{\ppbar}}}(K_\infty,T_\nu)&=\varprojlim_n{\rm Sel}_{{\cL_\pp,\cL_{\ppbar}}}(K_n,T_\nu),\\
{\rm Sel}_{{\cL_\pp,\cL_{\ppbar}}}(K_\infty,A_\nu)&=\varinjlim_n{\rm Sel}_{{\cL_\pp,\cL_{\ppbar}}}(K_n,A_\nu).
\end{aligned}
\end{equation}

Let $\Lambda=\mathscr{O}\dBr{\Gamma^-}\simeq\mathscr{O}\dBr{W}$ be the anticyclotomic Iwasawa algebra, and denote by $\Psi_W:G_K\rightarrow\Lambda^\times$ the universal character given $g\mapsto[g\vert_{K_\infty}]$, where $\gamma\mapsto[\gamma]$ is the inclusion of $\Gamma^-$ intro $\Lambda^-$ as a group-like element. Then by Shapiro's lemma we have isomorphisms 
\[
\rH^1(K,T_\nu\otimes\Psi_W^{-1})\simeq\varprojlim_n\rH^1(K_n,T_\nu),\quad
\rH^1(K,A_\nu\otimes\Psi_W)\simeq\varinjlim_n\rH^1(K_n,A_\nu).
\]
In the following we let ${\rm Sel}_{\cL_\pp,\cL_{\ppbar}}(K,T_\nu\otimes\Psi_W^{-1})$ and ${\rm Sel}_{\cL_\pp,\cL_{\ppbar}}(K,A_\nu\otimes\Psi_W)$ be the Selmer groups corresponding to \eqref{eq:Iw-Sel} under these isomorphism, and let
\[
X_{\cL_\pp,\cL_{\ppbar}}(K,A_\nu\otimes\Psi_W)={\rm Hom}_{\Z_p}({\rm Sel}_{\cL_\pp,\cL_{\ppbar}}(K,A_\nu\otimes\Psi_W),\Q_p/\Z_p)
\] 
denote the Pontryagin dual of ${\rm Sel}_{\cL_\pp,\cL_{\ppbar}}(K,A_\nu\otimes\Psi_W)$.



We conclude this section by recalling the following result due to Agboola--Howard \cite{AHsplit} and Arnold \cite{Arnold} 
on the anticyclotomic Iwasawa main conjecture for $K$.

\begin{thm}\label{thm:AH}
Let $\nu$ be a Hecke character of $K$ of infinity type $(-1,0)$ satisfying that $\nu^\cc=\overline{\nu}$, where $\overline{\nu}$ is the complex conjugate of $\nu$, and suppose that $\nu$ has sign $+1$ in its functional equation. Then ${\rm Sel}_{\emptyset,0}(K,T_{\nu}\otimes\Psi_{W}^{-1})=0$, $X_{0,\emptyset}(K,A_{\nu^{\cc}}\otimes\Psi_{W}^{})$ is $\Lambda$-torsion, and
\[
{\rm char}_\Lambda\bigl(X_{0,\emptyset}(K,A_{\nu^{\cc}}\otimes\Psi_W^{})\bigr)=\bigl(\cL^-_{\pp,\nu}(W)\bigr)
\]
as ideals in $\Lambda_\cW\otimes_{\Z_p}\bQ_p$. 
\end{thm}

\begin{proof}
%
In the case where $\nu$ is the Hecke character associated with an elliptic curve $E/\bQ$ with CM by $K$, this is \cite[Thm.\,2.4.17]{AHsplit}. The general case is given in \cite[Thm.\,2.1]{Arnold}.
\end{proof}

\subsection{Selmer groups for triple products}\label{subsec:Sel-triple}

Let $(\bff,\bfg,\bfh)$ be a triple of primitive Hida families as in $\S\ref{subsec:triple}$ satisfying $\eqref{eq:a}$, and recall the self-dual twist $\Vdag=V_\bff\otimes V_\bfg\otimes V_\bfh\otimes\mathcal{X}^{-1}$ 
of the tensor product of their associated big Galois representations. 


\begin{defn}\label{def:local-p}
Put
\[
\mathscr{F}^{\rm bal}_p(\Vdag)=\mathscr{F}_p^2(\Vdag)
:=\bigl(V_{\bff}^+\otimes V_{\bfg}^+\otimes V_{\bfh}+V_{\bff}^+\otimes V_{\bfg}\otimes V_{\bfh}^++V_{\bff}\otimes V_{\bfg}^+\otimes V_{\bfh}^+\bigr)\otimes\mathcal{X}^{-1},
\]
and define the \emph{balanced local condition} $\rH^1_{\rm bal}(\Q_p,\Vdag)$ by 
\[
\rH^1_{\rm bal}(\Q_p,\Vdag):={\rm im}\bigl(\rH^1(\Q_p,\mathscr{F}_p^{\rm bal}(\Vdag))\rightarrow\rH^1(\Q_p,\Vdag)\bigr).
\]
Similarly, put $\mathscr{F}_p^{\bff}(\Vdag):=\bigl(V_{\bff}^+\otimes V_{\bfg}\otimes V_{\bfh}\bigr)\otimes\mathcal{X}^{-1}$ and define the \emph{$\bff$-unbalanced local condition} $\rH^1_{\unb}(\Q_p,\Vdag)$ by
\[
\rH^1_{\bff}(\Q_p,\Vdag):={\rm im}\bigl(\rH^1(\Q_p,\mathscr{F}_p^{\bff}(\Vdag))\rightarrow\rH^1(\Q_p,\Vdag)\bigr).
\]
\end{defn}

It is easy to see that the maps appearing in these definitions are injective, and in the following we shall use this to identify $\rH^1_{\any}(\Q_p,\Vdag)$ with $\rH^1(\Q_p,\mathscr{F}_p^\any(\Vdag))$ for $\any\in\{{\rm bal},\bff\}$.

\begin{defn}\label{def:Sel-bu}
Let $S$ be a finite set of primes containing $\infty$ and the primes dividing $pN_\varphi N_g N_h$, and let $G_{\bQ,S}$ be the Galois group of the maximal extension of $\Q$ unramified outside $S$. For every $\any\in\{{\rm bal},\bff\}$, define the Selmer group ${\rm Sel}^\any(\Q,\Vdag)$ by
\[
{\rm Sel}^\any(\Q,\Vdag):=\ker\biggl\{\rH^1(G_{\Q,S},\Vdag)\rightarrow\frac{\rH^1(\Q_p,\Vdag)}{\rH^1_\any(\Q_p,\Vdag)}\times\prod_{v\neq p}\rH^1(\Q_v^{\rm nr},\Vdag)\biggr\}.
\]
We call ${\rm Sel}^{\rm bal}(\Q,\Vdag)$ (resp. ${\rm Sel}^{\bff}(\Q,\Vdag)$) the \emph{balanced} (resp. \emph{$\bff$-unbalanced}) Selmer group.
\end{defn}

Let $\Adag={\rm Hom}_{\Z_p}(\Vdag,\mu_{p^\infty})$ and for $\any\in\{{\rm bal},\bff\}$ define $\rH^1_{\any}(\Q_p,\Adag)\subset\rH^1(\Q_p,\Adag)$ to be the orthogonal complement of $\rH^1_\any(\Q_p,\Vdag)$ under the local Tate duality
\[
\rH^1(\Q_p,\Vdag)\times\rH^1(\Q_p,\Adag)\rightarrow\Q_p/\Z_p.
\]
Similarly as above, we then define the balanced and $\bff$-unbalanced Selmer groups with coefficients in $\Adag$ by
\[
{\rm Sel}^\any(\Q,\Adag):=\ker\biggl\{\rH^1(G_{\Q,S},\Adag)\rightarrow\frac{\rH^1(\Q_p,\Adag)}{\rH_\any^1(\Q_p,\Adag)}\times\prod_{v\neq p}\rH^1(\Q_v^{\rm nr},\Adag)\biggr\},
\]
and let $X^\any(\Q,\Adag)={\rm Hom}_{\Z_p}({\rm Sel}^\any(\Q,\Adag),\Q_p/\Z_p)$ be the Pontryagin dual of ${\rm Sel}^\any(\Q,\Adag)$.

\subsection{Proof of the decompositions}\label{subsec:dec-S}

For a primitive Hida family $\bfff$, from now on we let $V_\bfff$ be the realisation of $\rho_\bfff$ arising from the $p$-adic \'{e}lale cohomology of the $p$-tower of modular curves of tame level $N_f$ as in the work of Ohta \cite{ohta-etI,ohta-etII} (see also \cite[\S{7.2}]{KLZ2}, whose conventions we adopt). We shall exploit the following fact.

\begin{prop}\label{prop:CM-ind}
Let $\bff=\boldsymbol{\theta}_\xi(S)$ be a primitive CM Hida family associated to the finite order character $\xi$ as in \eqref{eq:def-CM}. If $\bar{\xi}^-\vert_{G_{K_\pp}}\neq 1$, where $G_{K_\pp}\subset G_K$ is a decomposition group at $\pp$, then
\[
V_{\bff}\simeq{\rm Ind}_K^\bQ(T_\xi\otimes\Psi_{T})
\]
as $\mathscr{O}\dBr{S}[G_\bQ]$-modules, where $T=\mathbf{u}^{-1}(1+S)-1$.
\end{prop}

\begin{proof}
By \eqref{eq:residual-CM}, the condition $\bar{\xi}^-\vert_{G_{K_\pp}}\neq 1$ implies that the residual representation $\bar{\rho}_\bff$ is absolutely irreducible and $p$-distinguished. Thus, the result follows from a direct extension of the argument in \cite[Cor.~5.2.5]{LLZ-K} (see \cite[\S{3.2.3}]{BL-ord} for details).
\end{proof}

Suppose now that $(\varphi,\bfg,\bfh)$ is a CM triple as in $\S\ref{subsec:factor-L}$, with $\varphi=\theta(\lambda_0)\in S_2(\Gamma_1(N_\varphi))$ the newform associated to the specialisation of $\bff=\boldsymbol{\theta}_{\xi_0}(S_0)$ at $Q_0\in\mathfrak{X}_{\mathscr{O}\dBr{S_0}}^+$, and $(\bfg,\bfh)=(\boldsymbol{\theta}_{\lambda_1}(S_1),\boldsymbol{\theta}_{\lambda_2}(S_2))$ primitive CM Hida families associated to the ray class characters $\lambda_1,\lambda_2$. Suppose
\begin{equation}\label{eq:lambdai-dist}
\lambda_i(\pp)\not\equiv\lambda_i(\ppbar)\pmod{p}\quad\textrm{for $i=0,1,2$.}
\end{equation}
Letting $\Vdagsp$ be the image of $\Vdag$ under the specialisation map $(\ref{eq:sp})$ at $Q_0$, from Proposition~\ref{prop:CM-ind} we obtain
\[
\Vdagsp\simeq\Ind_K^\Q(T_{\lm_0})\otimes\Ind_K^\Q(T_{\lm_1}\otimes\Psi_{T_1})\otimes\Ind_K^\Q(T_{\lm_2}\otimes\Psi_{T_2})\otimes\mathcal{X}^{-1},
\]
where $\mathcal{X}=(\Psi_{T_1}^{1/2}\Psi_{T_2}^{1/2}\circ\mathscr{V}):G_\Q\rightarrow\Z_p\dBr{S_1,S_2}^\times$ with $T_i=\mathbf{u}^{-1}(1+S_i)-1$, and so an immediate computation shows that  
\begin{equation}\label{eq:dec-V}
\begin{aligned}
\Vdagsp
&\simeq\Ind_K^\Q(T_{\lm_0\lm_1\lm_2}\otimes\Psi_{W_1}^{1-\cc})\oplus\Ind_K^\Q(T_{\lm_0\lm_1^{\cc}\lm_2^{\cc}}\otimes\Psi_{W_1}^{\cc-1})\\
&\quad\oplus
\Ind_K^\Q(T_{\lm_0\lm_1\lm_2^{\cc}}\otimes\Psi_{W_2}^{1-\cc})\oplus
\Ind_K^\Q(T_{\lm_0\lm_1^{\cc}\lm_2}\otimes\Psi_{W_2}^{\cc-1}),
\end{aligned}
\end{equation}
where $W_1,W_2$ are as in Proposition~\ref{prop:factor-L}. 
Therefore, 
\begin{equation}\label{eq:shapiro}
\begin{aligned}
\rH^1(\Q,\Vdagsp)&\simeq\rH^1(K,T_{\lm_0\lm_1\lm_2}\otimes\Psi_{W_1}^{1-\cc})\oplus\rH^1(K,T_{\lm_0\lm_1^\cc\lm_2^\cc}\otimes\Psi_{W_1}^{\cc-1})\\
&\quad\oplus\rH^1(K,T_{\lm_0\lm_1\lm_2^\cc}\otimes\Psi_{W_2}^{1-\cc})\oplus\rH^1(K,T_{\lm_0\lm_1^\cc\lm_2}\otimes\Psi_{W_2}^{\cc-1})
\end{aligned}
\end{equation}
by Shapiro's lemma. 

\begin{prop}\label{prop:factor-S}
Under (\ref{eq:shapiro}), the balanced Selmer group 
decomposes as
\begin{align*}
{\rm Sel}^{\rm bal}(\Q,\Vdagsp)&\simeq{\rm Sel}_{\emptyset,0}(K,T_{\lm_0\lm_1\lm_2}\otimes\Psi_{W_1}^{1-\cc})\oplus{\rm Sel}_{0,\emptyset}(K,T_{\lm_0\lm_1^\cc\lm_2^\cc}\otimes\Psi_{W_1}^{\cc-1})\\
&\quad\oplus{\rm Sel}_{\emptyset,0}(K,T_{\lm_0\lm_1\lm_2^\cc}\otimes\Psi_{W_2}^{1-\cc})\oplus{\rm Sel}_{\emptyset,0}(K,T_{\lm_0\lm_1^\cc\lm_2}\otimes\Psi_{W_2}^{\cc-1}),
\end{align*}
and the $\varphi$-unbalanced Selmer group 
decomposes as
\begin{align*}
{\rm Sel}^{\unb}(\Q,\Vdagsp)&\simeq{\rm Sel}_{\emptyset,0}(K,T_{\lm_0\lm_1\lm_2}\otimes\Psi_{W_1}^{1-\cc})\oplus{\rm Sel}_{\emptyset,0}(K,T_{\lm_0\lm_1^\cc\lm_2^\cc}\otimes\Psi_{W_1}^{\cc-1})\\
&\quad\oplus{\rm Sel}_{\emptyset,0}(K,T_{\lm_0\lm_1\lm_2^\cc}\otimes\Psi_{W_2}^{1-\cc})\oplus{\rm Sel}_{\emptyset,0}(K,T_{\lm_0\lm_1^\cc\lm_2}\otimes\Psi_{W_2}^{\cc-1}).
\end{align*}
\end{prop}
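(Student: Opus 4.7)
The plan is to establish the stated decompositions in three steps: (i) decompose the ambient cohomology via Shapiro's lemma, (ii) verify the Selmer conditions away from $p$ match, and (iii) compute and compare the local conditions at $p$.

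For (i), Shapiro's lemma applied to the decomposition~(\ref{eq:dec-V}) of $\mathbf{V}_{Q_0}^\dagger$ yields the ambient identification~(\ref{eq:shapiro}), and likewise at every local place. For (ii), at each rational prime $\ell\neq p$ the unramified condition on the triple-product side translates, via local Shapiro at each place of $K$ above $\ell$, to unramified conditions on each $\chi_i$-piece. Since the anticyclotomic variables $\Psi_{W_i}^{\pm(1-\cc)}$ are unramified outside $p$, these match the local conditions at $w\nmid p\infty$ used in the character Selmer groups ${\rm Sel}_{\any,\any}(K,\cdot)$.

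The key step is (iii). Since $p=\pp\ppbar$ splits in $K$, Shapiro's lemma at $p$ gives
\[
\rH^1(\Q_p,\mathbf{V}_{Q_0}^\dagger)\simeq\bigoplus_{i=1}^{4}\bigl(\rH^1(K_\pp,\chi_i)\oplus\rH^1(K_\ppbar,\chi_i)\bigr),
\]
and the task reduces to computing the images of $\mathscr{F}_p^{\bff}(\mathbf{V}_{Q_0}^\dagger)$ and $\mathscr{F}_p^{\rm bal}(\mathbf{V}_{Q_0}^\dagger)$ under this decomposition. Label the eight characters in $\mathbf{V}_{Q_0}^\dagger|_{G_{\Q_p}}$ by triples $(\epsilon_1,\epsilon_2,\epsilon_3)\in\{\pm\}^3$ recording the choice of $V^\pm$ in each of the three tensor factors, with the convention that $V^+|_{G_{K_\pp}}$ picks out the $\pp$-component of the underlying Hecke character. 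The eight characters pair into $c$-orbits $\{(\epsilon_1,\epsilon_2,\epsilon_3),(-\epsilon_1,-\epsilon_2,-\epsilon_3)\}$ which, after the twist by $\mathcal{X}^{-1}$, correspond to the four inductions in~(\ref{eq:dec-V}). Tracking the orbits (using $\psi^{-1}=\phi^{\cc}\xi^{\cc}$ exactly as in the proof of Proposition~\ref{prop:factor-L}) yields
\[
\chi_{1,\pp}\leftrightarrow(+,+,+),\quad\chi_{2,\pp}\leftrightarrow(+,-,-),\quad\chi_{3,\pp}\leftrightarrow(+,+,-),\quad\chi_{4,\pp}\leftrightarrow(+,-,+),
\]
with $\chi_{i,\ppbar}$ obtained by flipping all signs. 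Hence $\mathscr{F}_p^{\bff}$ consists of the $(+,\pm,\pm)$-directions, i.e.\ $\bigoplus_i\chi_{i,\pp}$ --- giving ${\rm Sel}_{\emptyset,0}$ on each $\chi_i$-piece --- while $\mathscr{F}_p^{\rm bal}$, being spanned by the four directions with at least two $+$'s, equals $\{\chi_{1,\pp},\chi_{3,\pp},\chi_{4,\pp},\chi_{2,\ppbar}\}$, yielding ${\rm Sel}_{\emptyset,0}$ on $\chi_1,\chi_3,\chi_4$ and ${\rm Sel}_{0,\emptyset}$ on $\chi_2$, as claimed.

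The main obstacle is the bookkeeping in step (iii), namely correctly identifying the $V^\pm$-directions of the tensor product with the $\chi_{i,\pp}$ and $\chi_{i,\ppbar}$ components after the $\mathcal{X}^{-1}$-twist and the substitution $\psi^{-1}=\phi^{\cc}\xi^{\cc}$. This is essentially the same computation appearing in the proof of Proposition~\ref{prop:factor-L}; the balanced case differs from the $\bff$-unbalanced case only in replacing the direction $(+,-,-)=\chi_{2,\pp}$ by its $c$-conjugate $(-,+,+)=\chi_{2,\ppbar}$, which accounts for the single ${\rm Sel}_{0,\emptyset}$ piece in the balanced decomposition.
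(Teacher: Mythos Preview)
Your proof is correct and follows essentially the same approach as the paper. The paper isolates the local computation at $p$ into a separate lemma (Lemma~\ref{lem:shapiro}), where it writes out $\mathscr{F}^{\rm bal}_\pp(\mathbf{V}_{Q_0}^\dagger)$ and $\mathscr{F}^{\rm bal}_{\ppbar}(\mathbf{V}_{Q_0}^\dagger)$ directly in terms of the four characters, and then deduces the proposition from that lemma; your $\{\pm\}^3$ bookkeeping is just a cleaner way to organize the same identification, and your observation that the balanced filtration differs from the $\bff$-unbalanced one by swapping $(+,-,-)$ for $(-,+,+)$ is exactly the content of that lemma.
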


The proof will follow easily from the following.

\begin{lem}\label{lem:shapiro}
Under $(\ref{eq:shapiro})$, the Selmer group ${\rm Sel}^{\rm bal}(\Q,\Vdagsp)$ corresponds to the submodule of 
\[
\rH^1(K,(T_{\lm_0\lm_1\lm_2}\otimes\Psi_{W_1}^{1-\cc})\oplus  (T_{\lm_0\lm_1^\cc\lm_2^\cc}\otimes\Psi_{W_1}^{\cc-1})\oplus (T_{\lm_0\lm_1\lm_2^\cc}\otimes\Psi_{W_2}^{1-\cc})\oplus (T_{\lm_0\lm_1^\cc\lm_2}\otimes\Psi_{W_2}^{\cc-1}))
\] 
consisting of unramified-outside-$p$ classes $x$ with ${\rm res}_v(x)$ belonging to
\[
\begin{cases}
\rH^1(K_\pp,(T_{\lm_0\lm_1\lm_2}\otimes\Psi_{W_1}^{1-\cc})\oplus(T_{\lm_0\lm_1\lm_2^\cc}\otimes\Psi_{W_2}^{1-\cc})\oplus (T_{\lm_0\lm_1^\cc\lm_2}\otimes\Psi_{W_2}^{\cc-1}))&\textrm{if $v=\pp$,}\\[0.2em]
\rH^1(K_{\ppbar},T_{\lm_0\lm_1^\cc\lm_2^\cc}\otimes\Psi_{W_1}^{\cc-1})&\textrm{if $v=\ppbar$,}
\end{cases}
\]
and the Selmer group ${\rm Sel}^{\unb}(\Q,\Vdagsp)$ similarly corresponds to the submodule consisting of unramified-outside-$p$ classes $x$ with ${\rm res}_{\ppbar}(x)=0$ (and no condition at $\pp$).
\end{lem}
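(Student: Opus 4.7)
The plan is to combine Shapiro's lemma with an explicit analysis of the ordinary filtrations at $p$. Applying Shapiro to each of the four induced summands of $\mathbf{V}_{Q_0}^\dagger$ in (\ref{eq:dec-V}) gives not only the global isomorphism (\ref{eq:shapiro}) but also, for each prime $v$ of $\Q$, the place-by-place decomposition
\[
\rH^1(\Q_v,\Ind_K^\Q\chi_i)\cong\bigoplus_{w\mid v}\rH^1(K_w,\chi_i),
\]
where $\chi_i$ denotes the $G_K$-character of the $i$th summand. Since $(p)=\pp\ppbar$ splits in $K$, the local version at $p$ reads $\rH^1(\Q_p,\Ind_K^\Q\chi_i)\cong\rH^1(K_\pp,\chi_i)\oplus\rH^1(K_\ppbar,\chi_i)$. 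The unramified-outside-$p$ condition on the $\Q$ side transports directly to the analogous condition on each $K$-side summand, so the remaining content of the lemma is the identification of $\mathscr{F}_p^{\rm bal}(\mathbf{V}_{Q_0}^\dagger)$ and $\mathscr{F}_p^\varphi(\mathbf{V}_{Q_0}^\dagger)$ inside the direct sum of local $\rH^1$'s at $p$.

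For this step, I would write the triple $V_\bff\otimes V_\bfg\otimes V_\bfh$, specialized at $Q_0$, as $\Ind_K^\Q(A)\otimes\Ind_K^\Q(B)\otimes\Ind_K^\Q(C)$ with $A=\lambda^{-1}\psi^{-1}$, $B=\phi^{-1}\Psi_{S_1}$, $C=\xi^{-1}\Psi_{S_2}$. By iterated Mackey, its restriction to $G_{\Q_p}$ decomposes into the eight lines $A^{\epsilon_0}B^{\epsilon_1}C^{\epsilon_2}\vert_{G_{K_\pp}}$ indexed by $(\epsilon_0,\epsilon_1,\epsilon_2)\in\{1,\cc\}^{3}$, and grouping these lines into $\cc$-orbits recovers, up to the $\mathcal{X}^{-1}$ twist and the change of variables $W_1,W_2$ of Proposition~\ref{prop:factor-L}, the four summands of (\ref{eq:dec-V}). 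On the other hand, the construction of the CM Hida families together with the fact that $\Psi_{S_0},\Psi_{S_1},\Psi_{S_2}$ are ramified at $\pp$ but unramified at $\ppbar$ pins down the ordinary filtrations: $V_\bff^+,V_\bfg^+,V_\bfh^+$ are the ramified lines, corresponding to $A\vert_{G_{K_\pp}},B\vert_{G_{K_\pp}},C\vert_{G_{K_\pp}}$, respectively, while $V_\bff^-,V_\bfg^-,V_\bfh^-$ correspond to the $\cc$-twisted lines.

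With this bookkeeping, the matching is purely combinatorial. The balanced filtration $\mathscr{F}_p^{\rm bal}(\Vdag)$ is spanned by the four lines in which at least two of the three factors are the untwisted ``$\pp$-direction'', which under the $\cc$-orbit grouping correspond to the $\pp$-parts of the first, third, and fourth summands of (\ref{eq:dec-V}) together with the $\ppbar$-part of the second summand; analogously, the unbalanced filtration $\mathscr{F}_p^\varphi(\Vdag)=V_\bff^+\otimes V_\bfg\otimes V_\bfh$ is spanned by the four lines whose first factor is ``$\pp$'', hence corresponds to the $\pp$-parts of all four summands. Pulling these descriptions back through the Shapiro isomorphism yields exactly the conditions at $\pp$ and $\ppbar$ asserted in the lemma.

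The main point of care is the sign-tracking between Mackey $\cc$-orbits and the summands of (\ref{eq:dec-V}) (so that, for instance, the ``all-$\pp$'' line indeed falls inside the $\pp$-part of the first summand rather than, say, the $\ppbar$-part of the second); this, however, is precisely the character computation already carried out in the proof of Proposition~\ref{prop:factor-L}, so no new obstacle arises here.
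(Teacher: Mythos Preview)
Your proposal is correct and follows essentially the same approach as the paper: both identify $\mathscr{F}_p^{\rm bal}(\mathbf{V}_{Q_0}^\dagger)$ and $\mathscr{F}_p^{\varphi}(\mathbf{V}_{Q_0}^\dagger)$ explicitly as sums of one-dimensional $G_{\Q_p}$-characters using the CM decomposition~(\ref{eq:dec-V}), and then read off the $\pp$- and $\ppbar$-components via Shapiro. The paper's proof simply writes down the resulting four-line expressions for $\mathscr{F}_p^{\rm bal}$ and $\mathscr{F}_p^{\varphi}$ directly, whereas you spell out the underlying bookkeeping (the eight Mackey lines, the identification of $V_\bff^+,V_\bfg^+,V_\bfh^+$ with the $\pp$-ramified lines, and the ``at least two $+$'s'' combinatorics for the balanced filtration); but the content is the same.
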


\begin{proof}
Using (\ref{eq:dec-V}) we see that the balanced local condition is given by
\begin{align*}
\mathscr{F}^{\rm bal}_p(\Vdagsp)
&=(T_{\lm_0\lm_1\lm_2}\otimes\Psi_{W_1}^{1-\cc})\oplus (T_{\lm_0^\cc\lm_1\lm_2}\otimes\Psi_{W_1}^{1-\cc})\\
&\quad\oplus (T_{\lm_0\lm_1\lm_2^\cc}\otimes\Psi_{W_2}^{1-\cc})\oplus(T_{\lm_0\lm_1^\cc\lm_2}\otimes\Psi_{W_2}^{\cc-1}),
\end{align*}
from where we obtain
\begin{equation}\label{eq:bal-shapiro}
\begin{aligned}
\mathscr{F}^{\rm bal}_\pp(\Vdagsp)&=(T_{\lm_0\lm_1\lm_2}\otimes\Psi_{W_1}^{1-\cc})\oplus (T_{\lm_0\lm_1\lm_2^\cc}\otimes\Psi_{W_2}^{1-\cc})\oplus (T_{\lm_0\lm_1^\cc\lm_2}\otimes\Psi_{W_2}^{\cc-1}),\\
\mathscr{F}^{\rm bal}_{\ppbar}(\Vdagsp)&=T_{\lm_0\lm_1^\cc\lm_2^\cc}\otimes\Psi_{W_1}^{\cc-1},
\end{aligned}
\end{equation}
yielding the stated descriptions of ${\rm Sel}^{\rm bal}(K,\Vdagsp)$. Similarly, 
we see that the $\varphi$-unbalanced local condition is given by
\begin{align*}
\mathscr{F}_p^{\unb}(\Vdagsp)
&=(T_{\lm_0\lm_1\lm_2}\otimes\Psi_{W_1}^{1-\cc})\oplus (T_{\lm_0\lm_1^\cc\lm_2^\cc}\otimes\Psi_{W_1}^{\cc-1})\\
&\quad\oplus (T_{\lm_0\lm_1\lm_2^\cc}\otimes\Psi_{W_2}^{1-\cc})\oplus(T_{\lm_0\lm_1^\cc\lm_2}\otimes\Psi_{W_2}^{\cc-1}),
\end{align*}
and this immediately yields the stated description of ${\rm Sel}^{\unb}(\Q,\Vdagsp)$.
\end{proof}


\begin{proof}[Proof of Proposition~\ref{prop:factor-S}]
Put 
\begin{equation}\label{eq:til-V}
\begin{aligned}
\widetilde{\mathbb{V}}_{\varphi\bfg\bfh}^\dagger&:=(T_{\lm_0\lm_1\lm_2}\otimes\Psi_{W_1}^{1-\cc})\oplus (T_{\lm_0\lm_1^\cc\lm_2^\cc}\otimes\Psi_{W_1}^{\cc-1})\\
&\quad\oplus (T_{\lm_0\lm_1\lm_2^\cc}\otimes\Psi_{W_2}^{1-\cc})\oplus(T_{\lm_0\lm_1^\cc\lm_2}\otimes\Psi_{W_2}^{\cc-1}),
\end{aligned}
\end{equation} 
so 
we have
\begin{equation}\label{eq:shapiro-h1}
\rH^1(\Q,\Vdagsp)\simeq\rH^1(K,\widetilde{\mathbb{V}}_{\varphi\bfg\bfh}^\dagger).
\end{equation}

Denoting by ${\rm Sel}^{\{p\}}(K,\widetilde{\mathbb{V}}_{\varphi\bfg\bfh}^\dagger)$ the submodule of $\rH^1(K,\widetilde{\mathbb{V}}_{\varphi\bfg\bfh}^\dagger)$ consisting of unramified-outside-$p$ classes, it follows from Lemma~\ref{lem:shapiro} that under the above isomorphism the balanced Selmer group ${\rm Sel}^{\rm bal}(\Q,\Vdagsp)$ corresponds to the kernel of the restriction map from ${\rm Sel}^{\{p\}}(K,\widetilde{\mathbb{V}}_{\varphi\bfg\bfh}^\dagger)$ to
\begin{align*}
&\frac{\rH^1(K_\pp,\widetilde{\mathbb{V}}_{\varphi\bfg\bfh}^\dagger)}{\rH^1(K_\pp,(T_{\lm_0\lm_1\lm_2}\otimes\Psi_{W_1}^{1-\cc})\oplus(T_{\lm_0\lm_1\lm_2^\cc}\otimes\Psi_{W_2}^{1-\cc})\oplus( T_{\lm_0\lm_1^\cc\lm_2}\otimes\Psi_{W_2}^{\cc-1}))}\times\frac{\rH^1(K_{\ppbar},\widetilde{\mathbb{V}}_{\varphi\bfg\bfh}^\dagger)}{\rH^1(K_{\ppbar},T_{\lm_0\lm_1^\cc\lm_2^\cc}\otimes\Psi_{W_1}^{\cc-1})},
\end{align*}
and this kernel is isomorphic to
\begin{align*}
{\rm Sel}_{\emptyset,0}(K,T_{\lm_0\lm_1\lm_2}\otimes\Psi_{W_1}^{1-\cc})\oplus{\rm Sel}_{\emptyset,0}(K,T_{\lm_0\lm_1\lm_2^\cc}\otimes\Psi_{W_2}^{1-\cc})&\oplus{\rm Sel}_{\emptyset,0}(K,T_{\lm_0\lm_1^\cc\lm_2}\otimes\Psi_{W_2}^{\cc-1})\\
&\quad\times{\rm Sel}_{0,\emptyset}(K,T_{\lm_0\lm_1^\cc\lm_2^\cc}\otimes\Psi_{W_1}^{\cc-1}).
\end{align*}
This shows the result for ${\rm Sel}^{\rm bal}(K,\Vdagsp)$, and the 
case of ${\rm Sel}^{\varphi}(K,\Vdagsp)$ follows from Lemma~\ref{lem:shapiro} in the same manner.
\end{proof}

As a consequence we also obtain the following decomposition for the Selmer groups with coefficients in $\Adagsp={\rm Hom}_{\Z_p}(\Vdagsp,\mu_{p^\infty})$, mirroring in the case of ${\rm Sel}^{\unb}(K,\Adagsp)$ the factorisation of $p$-adic $L$-functions in Proposition~\ref{prop:factor-L}.

\begin{cor}\label{cor:factor-S}
The balanced Selmer group ${\rm Sel}^{\rm bal}(\Q,\Adagsp)$ decomposes as
\begin{align*}
{\rm Sel}^{\rm bal}(\Q,\Adagsp)&\simeq{\rm Sel}_{0,\emptyset}(K,A_{\lm_0^\cc\lm_1^\cc\lm_2^\cc}\otimes\Psi_{W_1}^{\cc-1})\oplus{\rm Sel}_{\emptyset,0}(K,A_{\lm_0^\cc\lm_1\lm_2}\otimes\Psi_{W_1}^{1-\cc})\\
&\quad\oplus{\rm Sel}_{0,\emptyset}(K,A_{\lm_0^\cc\lm_1^\cc\lm_2}\otimes\Psi_{W_2}^{\cc-1})\oplus{\rm Sel}_{0,\emptyset}(K,A_{\lm_0^\cc\lm_1\lm_2^\cc}\otimes\Psi_{W_2}^{1-\cc}),
\end{align*}
and the $\varphi$-unbalanced Selmer group ${\rm Sel}^{\unb}(\Q,\Adagsp)$ decomposes as
\begin{align*}
{\rm Sel}^{\unb}(\Q,\Adagsp)&\simeq{\rm Sel}_{0,\emptyset}(K,A_{\lm_0^\cc\lm_1^\cc\lm_2^\cc}\otimes\Psi_{W_1}^{\cc-1})\oplus{\rm Sel}_{0,\emptyset}(K,A_{\lm_0^\cc\lm_1\lm_2}\otimes\Psi_{W_1}^{1-\cc})\\
&\quad\oplus{\rm Sel}_{0,\emptyset}(K,A_{\lm_0^\cc\lm_1^\cc\lm_2}\otimes\Psi_{W_2}^{\cc-1})\oplus{\rm Sel}_{0,\emptyset}(K,A_{\lm_0^\cc\lm_1\lm_2^\cc}\otimes\Psi_{W_2}^{1-\cc}).
\end{align*}
\end{cor}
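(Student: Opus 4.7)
The plan is to mirror the proof of Proposition~\ref{prop:factor-S} with $\mathbf{A}_{Q_0}^\dagger$ in place of $\mathbf{V}_{Q_0}^\dagger$, combined with the natural isomorphism of $G_\Q$-modules $\Ind_K^\Q(X)\simeq\Ind_K^\Q(X^\cc)$ (induced by the non-trivial automorphism $\cc$ of $K/\Q$) to relabel each summand of (\ref{eq:dec-V}) by its $\cc$-conjugate. Since $\Vdag$ is self-dual by construction, $\Adag_{Q_0}$ has the same underlying Galois structure as $\Vdag_{Q_0}\otimes\Q_p/\Z_p$, so Shapiro's lemma applied to the $\cc$-conjugated form of (\ref{eq:dec-V}) yields a four-fold decomposition of $\rH^1(\Q,\Adag_{Q_0})$ involving exactly the four pairs $(A_{\lambda^{-\cc}\psi^{1-\cc}},\Psi_{W_1}^{\cc-1})$, $(A_{\lambda^{-\cc}},\Psi_{W_1}^{1-\cc})$, $(A_{\lambda^{-\cc}\phi^{1-\cc}},\Psi_{W_2}^{\cc-1})$, $(A_{\lambda^{-\cc}\xi^{1-\cc}},\Psi_{W_2}^{1-\cc})$ appearing in the corollary.

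Next I would compute the local conditions at $p$ on $\Adag_{Q_0}$. By definition, $\rH^1_{\rm bal}(\Q_p,\Adag_{Q_0})$ and $\rH^1_\bff(\Q_p,\Adag_{Q_0})$ are the orthogonal complements under local Tate duality of their $\Vdag_{Q_0}$-counterparts. Using the identity $(A\otimes B\otimes C)^\perp=A^\perp\otimes B\otimes C+A\otimes B^\perp\otimes C+A\otimes B\otimes C^\perp$ applied to the self-duality pairings on each Hida-family factor, together with the fact that each $V_\bullet^+\subset V_\bullet$ is a maximal isotropic line, one checks that both $\mathscr{F}_p^{\rm bal}(\Vdag)$ and $\mathscr{F}_p^\bff(\Vdag)$ are self-orthogonal. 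Consequently, $\rH^1_{\rm bal}(\Q_p,\Adag_{Q_0})$ and $\rH^1_\bff(\Q_p,\Adag_{Q_0})$ are induced from sub-$G_{\Q_p}$-modules of $\Adag_{Q_0}$ having the same shape as (\ref{eq:bal-shapiro}) and its $\bff$-unbalanced analog, only now with $\Q_p/\Z_p$-coefficients. The analog of Lemma~\ref{lem:shapiro} with $A$-coefficients then identifies ${\rm Sel}^{\rm bal}(\Q,\Adag_{Q_0})$ and ${\rm Sel}^\bff(\Q,\Adag_{Q_0})$ as the submodules of unramified-outside-$p$ classes with specified local conditions at $\pp$ and $\ppbar$ on each summand of the Shapiro decomposition.

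Finally, it remains to match these local conditions with those stated in the corollary. Since $\cc$-conjugation swaps the restrictions at $\pp$ and $\ppbar$, the relabelling in the first step interchanges $(\emptyset,0)$ and $(0,\emptyset)$ in each factor. This transforms the V-patterns $(\emptyset,0),(0,\emptyset),(\emptyset,0),(\emptyset,0)$ (balanced) and $(\emptyset,0),(\emptyset,0),(\emptyset,0),(\emptyset,0)$ ($\bff$-unbalanced) from Proposition~\ref{prop:factor-S} into $(0,\emptyset),(\emptyset,0),(0,\emptyset),(0,\emptyset)$ and $(0,\emptyset),(0,\emptyset),(0,\emptyset),(0,\emptyset)$ respectively, recovering the decompositions stated in the corollary. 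The only genuinely non-routine point is the self-orthogonality of $\mathscr{F}_p^\bff(\Vdag)$, which looks asymmetric at first glance but is immediate from the factorwise orthogonal-complement identity; the rest is formal bookkeeping, making the corollary essentially a direct consequence of Proposition~\ref{prop:factor-S}.
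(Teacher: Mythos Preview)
Your approach is correct and is essentially an explicit unpacking of the paper's one-line proof (``immediate from Proposition~\ref{prop:factor-S} and local Tate duality''): the paper simply notes that dualising the local condition $(\emptyset,0)$ for $T_\chi$ yields $(0,\emptyset)$ for $A_{\chi^{-1}\varepsilon_{\rm cyc}}$, and since each summand $\Ind_K^\Q(\chi)$ of $\mathbf{V}_{Q_0}^\dagger$ is itself self-dual one has $\chi^{-1}\varepsilon_{\rm cyc}=\chi^\cc$. Your route through self-orthogonality of the filtrations followed by the $\cc$-conjugation relabelling encodes exactly the same computation.

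One correction: the identity $(A\otimes B\otimes C)^\perp=A^\perp\otimes B\otimes C+A\otimes B^\perp\otimes C+A\otimes B\otimes C^\perp$ is false as written; the annihilator of $A\otimes B\otimes C\subset V_1\otimes V_2\otimes V_3$ under a tensor-product pairing is $A^\perp\otimes V_2\otimes V_3+V_1\otimes B^\perp\otimes V_3+V_1\otimes V_2\otimes C^\perp$. For $\mathscr{F}_p^{\bff}(\Vdag)=V_\bff^+\hat\otimes V_\bfg\hat\otimes V_\bfh\otimes\mathcal{X}^{-1}$ this makes no difference (the second and third factors are full, so the two formulas agree), but for $\mathscr{F}_p^{\rm bal}(\Vdag)$ the self-orthogonality---while true---requires intersecting three such annihilators rather than a single application of your formula. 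A cleaner check is that $\mathscr{F}_p^{\rm bal}$ is spanned by the pure tensors with at least two factors in $V_\bullet^+$, and these pair nontrivially only with tensors having at least two factors outside $V_\bullet^+$.
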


\begin{proof}
This is immediate from Proposition~\ref{prop:factor-S} and local Tate duality.
\end{proof}

\begin{rem}\label{rem:reversed}
Note that the decompositions of ${\rm Sel}^{\rm bal}(\Q,\Adagsp)$ and ${\rm Sel}^{\unb}(\Q,\Adagsp)$ in Corollary~\ref{cor:factor-S} only differ in their second direct summand: the term ${\rm Sel}_{\emptyset,0}(K,A_{\lm_0^\cc\lm_1\lm_2}\otimes\Psi_{W_1}^{1-\cc})$ in the former is replaced by ${\rm Sel}_{0,\emptyset}(K,A_{\lm_0^\cc\lm_1\lm_2}\otimes\Psi_{W_1}^{1-\cc})$ in the latter (i.e., the local conditions at the primes above $p$ are reversed). From the description in $\S\ref{subsec:Sel-char}$, it follows  that ${\rm Sel}_{0,\emptyset}(K,A_{\lm_0^\cc\lm_1\lm_2}\otimes\Psi_{W_1}^{1-\cc})$ corresponds to a Bloch--Kato Selmer group for $\lambda_0^\cc\lambda_1\lambda_2$ over the anticyclotomic $\Z_p$-extension of $K$. 
\end{rem}

%


\section{Diagonal cycle main conjecture}\label{sec:IMC}

In this section we give the proof of a two-variable variant of the diagonal cycle main conjecture formulated in \cite{ACR} specialised to the CM case.  

\subsection{Big diagonal classes}\label{subsec:big-diag}

Let $(\varphi,\bfg,\bfh)$ 
be a CM triple as in $\S\ref{subsec:factor-L}$, with $\varphi=\theta(\lm_0)\in S_2(\Gamma_1(N_\varphi))$ the 
newform corresponding to the specialisation of $\bff=\boldsymbol{\theta}_{\xi_0}(S_0)$ at $Q_0\in\mathfrak{X}^+_{\mathscr{O}\dBr{S_0}}$ and $(\bfg,\bfh)=(\boldsymbol{\theta}_{\lambda_1}(S_1),\boldsymbol{\theta}_{\lambda_2}(S_2))$ primitive CM Hida families associated to the ray class characters $\lambda_1,\lambda_2$. 

Denote by 
\begin{equation}\label{eq:diag}
\kappa(\varphi,\bfg,\bfh)\in\rH^1(\Q,\Vdagsp)
\end{equation}
the specialisation at $Q_0$ of the big diagonal class $\kappa(\bff,\bfg,\bfh)$ obtained from the construction in  \cite[\S{8.1}]{BSV}. 
%
%
More precisely, letting $N={\rm lcm}(N_\varphi,N_g,N_h)$, the construction in \emph{loc.\,cit.} produces a big diagonal class in the cohomology of a representation $\Vdagsp(N)$ non-canonically isomorphic to finitely many copies of $\Vdagsp$; our class (\ref{eq:diag}) is obtained by taking its image under the map $\Vdagsp(N)\rightarrow\Vdagsp$ associated to the level-$N$ test vectors for $(\varphi,\bfg,\bfh)$ furnished by \cite[Thm.~A]{hsieh-triple}.

Assume hypothesis \eqref{eq:lambdai-dist}. It follows from \cite[Cor.\,8.2]{BSV} that $\kappa(\varphi,\bfg,\bfh)$ lands in 
${\rm Sel}^{\rm bal}(\bQ,\Vdagsp)$, and therefore viewing this class in $\rH^1(K,\widetilde{\mathbb{V}}_{\varphi\bfg\bfh}^\dag)$ via \eqref{eq:shapiro-h1}, by Proposition~\ref{prop:factor-S} this implies that
\[
{\rm res}_{\ppbar}(\kappa(\varphi,\bfg,\bfh))\in\rH^1(K_{\ppbar},T_{\lm_0\lm_1^\cc\lm_2^\cc}\otimes\Psi_{W_1}^{\cc-1}),
\]
where $W_1=\mathbf{u}^{-1}(1+S_1)^{1/2}(1+S_2)^{1/2}-1$. 

For any $\mathscr{O}\dBr{S_1,S_2}$-module $M$ and integers $k_1,k_2\geq 1$  of the same parity, denote by $M_{k_1,k_2}$ the specialisation of $M$ at $S_i=\mathbf{u}^{k_i}-1$. Let $\Phi$ be the field of fractions of $\mathscr{O}$.  Then 
one easily checks that there are isomorphisms
\begin{equation}\label{eq:BK}
\begin{cases}
{\rm log}_{\ppbar}:\rH^1(K_{\ppbar},T_{\lm_0\lm_1^\cc\lm_2^\cc}\otimes\Psi_{W_1}^{\cc-1})_{k_1,k_2}\otimes\Q_p\rightarrow \Phi &\textrm{if $k_1+k_2\geq 3$},\\[0.2em]
{\rm exp}_{\ppbar}^*:\rH^1(K_{\ppbar},T_{\lm_0\lm_1^\cc\lm_2^\cc}\otimes\Psi_{W_1}^{\cc-1})_{k_1,k_2}\otimes\Q_p\rightarrow \Phi &\textrm{if $k_1+k_2=2$},
\end{cases}
\end{equation}
given by the Bloch--Kato logarithm and dual exponential maps, respectively.

\begin{thm}[Explicit reciprocity law]
\label{thm:ERL}
Let $(\varphi,\bfg,\bfh)=(\theta(\lm_0),\boldsymbol{\theta}_{\lm_1}(S_1)\boldsymbol{\theta}_{\lm_2}(S_2))$ be a CM triple as above, with $\mathcal{R}_{\varphi\bfg\bfh}=\mathscr{O}\dBr{S_1,S_2}$. 
There is an injective $\mathcal{R}_{\varphi\bfg\bfh}$-module homomorphism
\[
{\rm Log}^{\unb}:\rH^1(K_{\ppbar},T_{\lm_0\lm_1^\cc\lm_2^\cc}\otimes\Psi_{W_1}^{\cc-1})\rightarrow\mathcal{R}_{\varphi\bfg\bfh}
\]
with pseudo-null cokernel satisfying for any $\mathfrak{Z}\in\rH^1(K_{\ppbar},T_{\lm_0\lm_1^\cc\lm_2^\cc}\otimes\Psi_{W_1}^{\cc-1})$ the interpolation property
\[
{\rm Log}^{\unb}(\mathfrak{Z})_{k_1,k_2}=c_{k_1,k_2}\times\begin{cases}
{\rm log}_{\ppbar}(\mathfrak{Z}_{k_1,k_2})&\textrm{if $k_1+k_2\geq 3$,}\\[0.2em]
{\rm exp}^*_{\ppbar}(\mathfrak{Z}_{k_1,k_2})&\textrm{if $k_1+k_2=2$,}
\end{cases}
\]
where $c_{k_1,k_2}$ is an explicit nonzero constant, and such that
\[
{\rm Log}^{\unb}\bigl({\rm res}_{\overline{\pp}}(\kappa(\varphi,\bfg,\bfh))\bigr)=\mathscr{L}_p^\varphi(\varphi,\bfg,\bfh).
\]
\end{thm}

\begin{proof}
Let $\mathscr{F}_p^{3}(\Vdag)$ be the rank one subspace of $\mathscr{F}_p^{2}(\Vdag)=\mathscr{F}_p^{\rm bal}(\Vdag)$ given by
\[
\mathscr{F}_p^{3}(\Vdag)=V_\bff^+\hat\otimes_{\mathscr{O}} V_{\bfg}^+\hat\otimes_{\mathscr{O}} V_{\bfh}^+\otimes\mathcal{X}^{-1}.
\] 
Then one immediately finds that $\mathscr{F}^2_p(\Vdag)/\mathscr{F}_p^3(\Vdag)$ naturally contains the representation
\begin{equation}\label{eq:gr2}
\mathbf{V}_{\bff}^{\bfg\bfh}:=(V_\bff/V_\bff^+)\hat\otimes_{\mathscr{O}} V_{\bfg}^+\hat\otimes_{\mathscr{O}} V_{\bfh}^+\otimes\mathcal{X}^{-1}
\end{equation}
as a direct summand. Letting $\mathscr{F}_p^{3}(\Vdagsp)$ be the specialisation of $\mathscr{F}_p^{3}(\mathbf{V}^\dagger)$, and likewise for $\mathbb{V}_\varphi^{\bfg\bfh}$, in terms of the description given in the proof of Lemma~\ref{lem:shapiro}, we find that 
\[
\mathscr{F}_\pp^3(\Vdagsp)=T_{\lm_0\lm_1\lm_2}\otimes\Psi_{W_1}^{1-\cc},\quad\quad\mathscr{F}_{\ppbar}^3(\Vdagsp)=0,
\]
and that 
\[
\mathbb{V}_{\varphi}^{\bfg\bfh}\simeq\mathscr{F}_{\ppbar}^{\rm bal}(\Vdagsp)\simeq T_{\lm_0\lm_1^\cc\lm_2^\cc}\otimes\Psi_{W_1}^{\cc-1}.
\]
The construction of the map ${\rm Log}^{\unb}$ thus follows from specialising at $Q_0$ the three-variable $p$-adic regulator map  of \cite[\S{7.1}]{BSV} similarly as in the proof of \cite[Prop.~7.3]{ACR}. (Alternatively, a construction of ${\rm Log}^\varphi$ can be obtained from a generalised Coleman power series map \cite[App.]{kobayashi-PRtwist-I} for an appropriate Lubin--Tate extension of $\Q_p$, cf. \cite[Thm.~3.4]{cas-hsieh-ord}.) 
The stated explicit reciprocity law is a consequence of \cite[Thm.~A]{BSV}. 
\end{proof}

As explained in \cite[\S{7.3}]{ACR}, the following result can be seen as the equivalence between two formulations (`without $p$-adic $L$-functions' and `with $p$-adic $L$-functions', respectively) of the Iwasawa--Greenberg main conjecture \cite{Greenberg55} for $\Vdagsp$.

\begin{prop}\label{prop:equiv}
The following statements (1)-(2) are equivalent:
\begin{enumerate}
\item{} $\kappa(\varphi,\bfg,\bfh)$ is not $\mathcal{R}_{\varphi\bfg\bfh}$-torsion,
\[
{\rm rank}_{\mathcal{R}_{\varphi\bfg\bfh}}\bigl({\rm Sel}^{\rm bal}(\Q,\Vdagsp)\bigr)={\rm rank}_{\mathcal{R}_{\varphi\bfg\bfh}}\bigl(X^{\rm bal}(\Q,\Adagsp)\bigr)=1,
\]
and the following equality holds in $\mathcal{R}_{\varphi\bfg\bfh}\otimes\Q_p$:
\[
{\rm char}_{\mathcal{R}_{\varphi\bfg\bfh}}\bigl(X^{\rm bal}(\Q,\Adagsp)_{\rm tors}\bigr)={\rm char}_{\mathcal{R}_{\varphi\bfg\bfh}}\biggl(\frac{{\rm Sel}^{\rm bal}(\Q,\Vdagsp)}{(\kappa(\varphi,\bfg,\bfh))}\biggr)^2.
\]
\item{} $\mathscr{L}_p^\varphi(\varphi,\bfg,\bfh)$ is nonzero, 
\[
{\rm rank}_{\mathcal{R}_{\varphi\bfg\bfh}}\bigl({\rm}{\rm Sel}^{\unb}(\Q,\Vdagsp)={\rm rank}_{\mathcal{R}_{\varphi\bfg\bfh}}\bigl(X^{\unb}(\Q,\Adagsp)\bigr)=0, 
\]
and the following equality holds in $\mathcal{R}_{\varphi\bfg\bfh}\otimes\Q_p$:
\[
{\rm char}_{\mathcal{R}_{\varphi\bfg\bfh}}\bigl(X^{\unb}(\Q,\Adagsp)\bigr)=\bigl(\mathscr{L}_p^\varphi(\varphi,\bfg,\bfh)\bigr)^2.
\]
\end{enumerate}
\end{prop}

\begin{proof}
This can be deduced from Theorem~\ref{thm:ERL} and global duality in the same way as \cite[Thm.~7.15]{ACR}. 
See \cite{lai-PhD} for the details in the stated level of generality. 
%
\end{proof}

\subsection{Proof of Theorem~\ref{thmintro:A}}

We can now conclude the proof of our main first main result towards the diagonal cycle main conjecture (Conjecture~\ref{conj:DCMC}). 

Recall that we let $K$ be an imaginary quadratic field in which the prime $p\geq 5$ satisfies \eqref{eq:spl} and (for simplicity) $p\nmid h_K$; and 
\[
(\varphi,\bfg,\bfh)=(\theta(\lambda_0),\boldsymbol{\theta}(\lambda_1),\boldsymbol{\theta}(\lambda_1))
\]
is a CM triple as in $\S\ref{subsec:factor-L}$ associated to a triple of Hecke characters $(\lambda_0,\lambda_1,\lambda_2)$ satisfying \eqref{eq:sd-lambda} and of conductors coprime to $p$.
%
%
%

\begin{thm}\label{thm:A}
Suppose that: 
\begin{itemize}
\item[(i)] $\lambda_i(\pp)\not\equiv\lambda_i(\ppbar)\pmod{p}$ for $i=0,1,2$.
\item[(ii)] ${\rm sign}(\lm_0\lm_1\lm_2)={\rm sign}(\lm_0\lm_1^\cc\lm_2^\cc)={\rm sign}(\lm_0\lm_1\lm_2^\cc)={\rm sign}(\lm_0\lm_1^\cc\lm_2)=+1$. 
\end{itemize}
Then the class $\kappa(\varphi,\bfg,\bfh)$ is not $\mathcal{R}_{\varphi\bfg\bfh}$-torsion, the modules ${\rm Sel}^{\rm bal}(\Q,\mathbb{V}_{\varphi\bfg\bfh}^\dagger)$ and $X^{\rm bal}(\bQ,\mathbb{A}_{\varphi\bfg\bfh}^\dagger)$ both have $\mathcal{R}_{\varphi\bfg\bfh}$-rank one, and the following equality holds in $\mathcal{R}_{\varphi\bfg\bfh}\otimes\Q_p$: 
\[
{\rm char}_{\mathcal{R}_{\varphi\bfg\bfh}}\bigl(X^{\rm bal}(\Q,\mathbb{A}_{\varphi\bfg\bfh}^\dagger)_{\rm tors}\bigr)={\rm char}_{\mathcal{R}_{\varphi\bfg\bfh}}\biggl(\frac{{\rm Sel}^{\rm bal}(\Q,\mathbb{V}_{\varphi\bfg\bfh}^\dagger)}{\mathcal{R}_{\varphi\bfg\bfh}\cdot\kappa(\varphi,\bfg,\bfh)}\biggr)^2.
\]
In other words, Conjecture~\ref{conj:DCMC} specialised to the triple $(\varphi,\bfg,\bfh)$ holds.
\end{thm}

\begin{proof}
By Greenberg's nonvanishing results \cite{greenberg-BSD,greenberg-critical} (see \cite[Cor.\,2.1.5]{AHsplit} and \cite[Prop.\,2.3]{Arnold}), hypothesis (ii) implies that the four anticyclotomic Katz $p$-adic $L$-functions appearing in the factorisation of Proposition~\ref{prop:factor-L} are nonzero, and so we deduce that $\mathscr{L}_p^\varphi(\varphi,\bfg,\bfh)$ is also nonzero. 
Moreover,  Theorem\,\ref{thm:AH} implies that each of the direct summands in the decomposition of $X^\varphi(\bQ,\Adagsp)$ given in Proposition\,\ref{prop:factor-S} is torsion, with characteristic ideal generated by a corresponding anticyclotomic Katz $p$-adic $L$-function. Therefore $X^\varphi(\bQ,\Adagsp)$ is $\mathcal{R}_{\varphi\bfg\bfh}$-torsion, with 
\[
{\rm char}_{\mathcal{R}_{\varphi\bfg\bfh}}\bigl(X^\varphi(\bQ,\Adagsp)
\bigr)=\bigl(\mathscr{L}_p^\varphi(\varphi,\bfg,\bfh)\bigr)^2
\]
in $\mathcal{R}_{\varphi\bfg\bfh}\otimes\Q_p$. Together with the equivalence of Proposition\,\ref{prop:equiv}, this concludes the proof. 
\end{proof}

\section{Generalised Kato classes}\label{sec:GKC}

In this section we define generalised Kato classes attached to CM elliptic curves $E/\bQ$, following a slight modification of the construction due to Darmon--Rotger \cite{DR2.5}. Then we prove our main result on the nonvanishing of these classes in situations of rank $2$.

\subsection{Construction of the classes}\label{subsec:setting}

Let $E/\Q$ be an elliptic curve with CM by the maximal order $\cO_K$. Let $p\geq 5$ be a prime of good \emph{ordinary} reduction for $E$; in particular, (\ref{eq:spl}) holds.  Let $\psi_E$ be the Grossencharacter of $K$ associated to $E$ by the theory of complex multiplication, so we have 
\[
L(E,s)=L(\psi_E,s). 
\] 
Note that $L(\psi_E,s)=L(\psi_E^\cc,s)=L(\psi_E^{-1},s-1)$, using the relations $\psi_E^\cc=\overline{\psi_E}$ and $\psi_E\overline{\psi_E}=\mathbf{N}$ for the second equality.


Let $\ell_1, \ell_2$ be distinct primes split in $K$ with $(\ell_1 \ell_2,pN_E)=1$, where $N_E$ is the conductor of $E$. For $i=1,2$, let $\phi_i$  be a ring class character of conductor $\ell_i^{m_i}\cO_K$ for some $m_i>0$. 

\begin{prop}
\label{prop:nonvanishing}
Suppose $E$ has root number $+1$. Then there exist infinitely many ring class characters $\phi_1,\phi_2$ as above with 
\[
\phi_1\vert_{G_{K_\pp}},\phi_2\vert_{G_{K_\pp}}\neq 1,
\] 
where $G_{K_\pp}\subset G_K$ is a decomposition group at $\pp$, and such that
\[
L(\psi_E^{-1}\phi_1^{-1},0)\cdot L(\psi_E^{-1}\phi_2^{-1},0)\cdot L(\psi_E^{-1}\phi_1^{-1}\phi_2^{-1},0)\neq 0.
\]
\end{prop}

\begin{proof}
Since the primes $\ell_1$ and $\ell_2$ both split in $K$, for any ring class characters $\phi_1,\phi_2$ as above the signs in the functional equations for $L(\psi_E^{-1}\phi_1^{-1},s), L(\psi_E^{-1}\phi_2^{-1},s), L(\psi_E^{-1}\phi_1^{-1}\phi_2^{-1},s)$ are the same as the sign of $L(E,s)$. Thus if $E$ has root number $+1$, the nonvanishing results of \cite{greenberg-critical} and \cite{rohrlich-ac} imply that, as $\phi_i$ vary in the set of ring class character of $\ell_i$-power conductor for $i=1,2$, only finitely many of the values $L(\psi_E^{-1}\phi_1^{-1},0)\cdot L(\psi_E^{-1}\phi_2^{-1},0)\cdot L(\psi_E^{-1}\phi_1^{-1}\phi_2^{-1},0)$ are zero, whence the result.
\end{proof}

We now fix a pair of ring class characters $\phi_1,\phi_2$ as above, and writing
\[
\phi_i=\lambda_i^{1-\cc} 
\]
with $\lambda_i$ a ray class character modulo $\ell^{m_i}\cO_K$ (see \cite[Lem.\,5.31]{HMI}, for example), we consider the CM triple 
\begin{equation}\label{eq:choice}
(\varphi,\bfg,\bfh)=(\theta(\psi_E\lm_1^{-\cc}\lm_2^{-\cc}),\boldsymbol{\theta}_{\lm_1}(S_1),\boldsymbol{\theta}_{\lm_2}(S_2)).
\end{equation}


%

The following definition 
might be seen as a twisted variant of the \emph{generalised Kato classes} introduced by Darmon--Rotger \cite{DR2.5} (whose construction in the setting conjecturally of relevance for rank two elliptic curves over $\Q$, namely the ``adjoint case''studied in [\emph{op.\,cit.}, \S{4.3}], would suggest taking $\varphi=\theta(\psi_E)$, rather than $\varphi$ as in \eqref{eq:choice}).

\begin{defn}[Generalised Kato class]
\label{def:GKC}
Denote by $\Vdags$ the restriction of 
$\Vdagsp$ to $S_1=S_2$, let $\kappa(\varphi,\underline{\bfg\bfh})\in\rH^1(\Q,\Vdags)$ be the corresponding projection of \eqref{eq:diag}, and put $S=S_1$. We define $\kappa_p$ to be the image of $\kappa(\varphi,\underline{\bfg\bfh})$ under the composition
\[
{\rm Sel}^{\rm bal}(\Q,\Vdags)\rightarrow{\rm Sel}_{0,\emptyset}(K,T_{\psi_E}\otimes\Psi_{S}^{\cc-1})\rightarrow{\rm Sel}_{0,\emptyset}(K,T_{\psi_E}),
\]
where the first arrow is given by the projection onto the second direct summand from the decomposition of ${\rm Sel}^{\rm bal}(\Q,\Vdags)$ from  Proposition~\ref{prop:factor-S} (with $\lambda_0:=\psi_E\lambda_1^{-\cc}\lambda_2^{-\cc}$), and the second arrow is induced by the multiplication by $S$ on $T_{\psi_E}\otimes\Psi_S^{\cc-1}$. 
\end{defn}


Using $\rH^1(K,T_{\psi_E})\otimes\Q_p\simeq\rH^1(\Q,V_pE)$, we shall also view $\kappa_p$ in the latter group.

\begin{lem}\label{lem:ERL}
Suppose $L(E,1)=0$. Then $\kappa_p\in{\rm Sel}(\Q,V_pE)$.
\end{lem}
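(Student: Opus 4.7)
The argument is a local computation at $\ppbar$. Since $\kappa_p \in {\rm Sel}_{0,\emptyset}(K,T_{\lambda^{-1}})$ by construction, it is unramified outside $p$ and trivial at $\pp$, which suffices for membership in ${\rm Sel}(\Q,V_pE) = {\rm Sel}_{\emptyset,0}(K,V_{\lambda^{-1}})$ at every place other than $\ppbar$. At $\ppbar$, the Bloch--Kato subspace $\rH^1_f(K_{\ppbar},V_{\lambda^{-1}})$ vanishes (since $V_{\lambda^{-1}}|_{G_{K_{\ppbar}}}$ is unramified with Frobenius eigenvalue $\lambda(\ppbar)^{-1} \ne 1$), so the claim reduces to showing that ${\rm res}_{\ppbar}(\kappa_p) = 0$.

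To compute this restriction, let $\widetilde\kappa \in {\rm Sel}_{0,\emptyset}(K,T_{\lambda^{-1}} \otimes \Psi_S^{\cc-1})$ denote the projection of $\kappa(\varphi,\bfg\bfh)$ onto the second summand of (\ref{eq:bal-Sel}), so that $\kappa_p$ is the reduction of $\widetilde\kappa$ modulo $S$. The formula (\ref{eq:bal-shapiro}), restricted to the diagonal $S_1 = S_2 = S$, shows that the $\ppbar$-component of the balanced local condition equals $T_{\lambda^{-1}} \otimes \Psi_S^{\cc-1}$; hence ${\rm res}_{\ppbar}(\widetilde\kappa) = {\rm res}_{\ppbar}(\kappa(\varphi,\bfg\bfh))$, and ${\rm res}_{\ppbar}(\kappa_p)$ is its specialization at $S = 0$. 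The Explicit Reciprocity Law (Theorem~\ref{thm:ERL}) reads
\[
{\rm Log}^{\unb}\bigl({\rm res}_{\ppbar}(\kappa(\varphi,\bfg\bfh))\bigr) = \mathscr{L}_p^{\varphi}(\varphi,\bfg\bfh),
\]
and evaluating at $S = 0$ via the interpolation property at $k = 2$ yields
\[
c_2 \cdot {\rm exp}^*_{\ppbar}\bigl({\rm res}_{\ppbar}(\kappa_p)\bigr) = \mathscr{L}_p^{\varphi}(\varphi,\bfg\bfh)(0).
\]
Since ${\rm exp}^*_{\ppbar}\colon \rH^1(K_{\ppbar},V_{\lambda^{-1}}) \to \Q_p$ is an isomorphism (its kernel being $\rH^1_f(K_{\ppbar},V_{\lambda^{-1}}) = 0$) and $c_2 \ne 0$, the task further reduces to proving that $\mathscr{L}_p^{\varphi}(\varphi,\bfg\bfh)(0) = 0$.

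This will follow from Proposition~\ref{prop:factor-L} specialized to the diagonal $S_1 = S_2 = S$, where $(W_1,W_2) = (S,0)$:
\[
\mathscr{L}_p^{\varphi}(\varphi,\bfg\bfh)^2(S) = u \cdot \cL_{\pp,\lambda^{-1}\psi^{\cc-1}}^-(S)^\iota \cdot \cL_{\pp,\lambda^{-1}}^-(S) \cdot \cL_{\pp,\lambda^{-1}\phi^{\cc-1}}^-(0)^\iota \cdot \cL_{\pp,\lambda^{-1}\xi^{\cc-1}}^-(0).
\]
At $S = 0$, the Katz functional equation (Theorem~\ref{thm:katz}) combined with the interpolation formula identifies the three factors attached to $\Phi = \phi^{1-\cc}$, $\Xi = \xi^{1-\cc}$, and $\psi^{\cc-1} = \Phi^{-1}\Xi^{-1}$ (up to nonzero explicit constants) with the values $L(\lambda^{-1}\Phi^{-1},0)$, $L(\lambda^{-1}\Xi^{-1},0)$, and $L(\lambda^{-1}\Phi^{-1}\Xi^{-1},0)$, all nonvanishing by our Choice~\ref{choice:setting} and Theorem~\ref{thm:nonvanishing}. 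The remaining factor $\cL_{\pp,\lambda^{-1}}^-(0)$ is, via the same functional equation, proportional to a Katz value interpolating $L(\lambda,1) = L(E,1)$, which vanishes by hypothesis. Hence $\mathscr{L}_p^{\varphi}(\varphi,\bfg\bfh)^2(0) = 0$, whence $\mathscr{L}_p^{\varphi}(\varphi,\bfg\bfh)(0) = 0$, completing the argument. The main obstacle is this last identification: since $\lambda^{-1}$ has infinity type $(-1,0)$, just outside Katz's range $(k+j,-j)$ with $k \geq 1$, the connection to $L(E,1)$ must be routed through the functional equation in Theorem~\ref{thm:katz}.
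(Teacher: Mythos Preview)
Your argument follows the paper's route exactly: reduce to ${\rm res}_{\ppbar}(\kappa_p)=0$, invoke the explicit reciprocity law at $k=2$ to equate $\exp^*_{\ppbar}({\rm res}_{\ppbar}(\kappa_p))$ with $\mathscr{L}_p^\varphi(\varphi,\bfg\bfh)(0)$ up to a nonzero constant, and then argue that the latter vanishes. The paper compresses all of this into one sentence citing Theorem~\ref{thm:ERL}, Proposition~\ref{prop:factor-S}, and Theorem~\ref{thm:katz}; you have unpacked it correctly.

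One point deserves care. Your last step asserts that the Katz functional equation carries $\cL_{\pp,\lambda^{-1}}^-(0)=\cL_{\pp}(\lambda^{-1})$ into the interpolation range, where it would pick up $L(E,1)$. But since $E$ is defined over $\Q$ one has $\lambda^\cc=\bar\lambda=\lambda^{-1}\mathbf{N}$, so for $\chi=\lambda^{-1}$ the functional‐equation partner $(\chi^\cc)^{-1}\mathbf{N}^{-1}$ equals $\lambda^{-1}$ again: the functional equation is a self‐symmetry at this central point and does not move you into either interpolation range. Thus the Katz interpolation alone does not identify $\cL_{\pp}(\lambda^{-1})$ with $L(E,1)$ up to a nonzero constant (indeed, central Katz values generally involve $p$-adic regulators rather than complex $L$-values). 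The cleanest way to close the argument is to bypass the factorization at this one point and use the triple-product interpolation (Theorem~\ref{thm:hsieh-triple}) directly at the weight $(2,1,1)$ specialization, which lies in $\mathfrak{X}_{\mathcal R}^\varphi$: this gives $(\mathscr{L}_p^\varphi(\varphi,\bfg\bfh)(0))^2$ as an explicit nonzero multiple of $L(\mathbb{V}^\dagger_{\underline Q},0)$, and by the decomposition (\ref{eq:dec-V}) at $W_1=W_2=0$ this $L$-value contains $L(\lambda^{-1},0)=L(E,1)$ as a factor. (Equivalently, one may quote formula~(\ref{eq:explicit}) from the proof of Proposition~\ref{prop:factor-L}, which records exactly this.)
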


\begin{proof}
Viewing $\kappa_p$ in ${\rm Sel}_{0,\emptyset}(K,T_{\psi_E})$, it suffices to show that ${\rm res}_{\ppbar}(\kappa_p)=0$. 
The assumption implies that $L(\psi_E^{-1},0)=0$, and so from Theorem\,\ref{thm:katz}, Proposition\,\ref{prop:factor-L}, and Theorem\,\ref{thm:ERL}, we obtain
\begin{align*}
L(E,1)=0\quad\Longrightarrow\quad\mathcal{L}_{\psi_E}^-(0)=0
\quad&\Longrightarrow\quad\mathscr{L}_p^\varphi(\varphi,\bfg,\bfh)(\mathbf{u}-1,\mathbf{u}-1)=0\\
\quad&\Longrightarrow\quad{\rm exp}^*_{\ppbar}(\kappa(\varphi,\bfg,\bfh)_{1,1})=0,
\end{align*}
where, as in Theorem\,\ref{thm:ERL}, $\kappa(\varphi,\bfg,\bfh)_{k_1,k_2}$ denotes the specialisation of $\kappa(\varphi,\bfg,\bfh)$ at $S_i=\mathbf{u}^{k_i}-1$. Since for $k_1+k_2=2$ the Bloch--Kato dual exponential \eqref{eq:BK} is injective, this gives the result.
\end{proof}

\subsection{A control theorem}

Let 
\[
\Lambda=\mathscr{O}\dBr{S}.
\] 
For any $\Lambda$-module $M$, we $M_{/S}$ for the cokernel of multiplication by $S$, i.e., $M_{/S}:=M/SM$. The following is a variant of Mazur's control theorem for the ``reversed'' Selmer groups of Remark~\ref{rem:reversed}.

\begin{prop}\label{prop:control}
There are natural isomorphisms
\begin{align*}
r^*:{\rm Sel}_{\emptyset,0}(K,A_{\psi_E^{\cc}})&\rightarrow{\rm Sel}_{\emptyset,0}(K,A_{\psi_E^{\cc}}\otimes\Psi_{S}^{1-\cc})[S],\\
r:{\rm Sel}_{0,\emptyset}(K,T_{\psi_E}\otimes\Psi_{S}^{\cc-1})_{/S}&\rightarrow{\rm Sel}_{0,\emptyset}(K,T_{\psi_E}),
\end{align*}
induced by multiplication by $S$.
\end{prop}

\begin{proof}
The map $r^*$ fits into the commutative diagram with exact rows
\begin{equation}\label{eq:diag-res}
\resizebox{\displaywidth}{!}{
\xymatrix{
0\ar[r]&{\rm Sel}_{\emptyset,0}(K,A_{\psi_E^{\cc}})\ar[r]\ar[d]^{r^*}&{\rm Sel}^{\{p\}}(K,A_{\psi_E^{\cc}})\ar[r]\ar[d]^{s^*}&\rH^1(K_{\ppbar},A_{\psi_E^{\cc}})\ar[d]^{t^*}\\
0\ar[r]&{\rm Sel}_{\emptyset,0}(K,A_{\psi_E^{\cc}}\otimes\Psi_{S}^{1-\cc})[S]\ar[r]&{\rm Sel}^{\{p\}}(K,A_{\psi_E^{\cc}}\otimes\Psi_{S}^{1-\cc})[S]\ar[r]&\rH^1(K_{\ppbar},A_{\psi_E^{\cc}}\otimes\Psi_{S}^{1-\cc})[S].\nonumber
}}
\end{equation}
It follows from Lubin--Tate theory that $K_{\ppbar}(E[\ppbar])$ has degree $p-1$ over $K_{\ppbar}$ (see e.g. \cite[Ch.~I]{deshalit}), and therefore $\rH^0(K_{\infty,\ppbar},E[\ppbar^\infty])=0$ since $\Gamma^-$ is pro-$p$. By inflation-restriction, it follows that the map $t^*$ is an isomorphism. 

The map $s^*$ fits into the commutative diagram with exact rows 
\begin{equation}\label{eq:diag-res}
\resizebox{\displaywidth}{!}{
\xymatrix{
0\ar[r]&{\rm Sel}^{\{p\}}(K,A_{\psi_E^{\cc}})\ar[r]\ar[d]^{s^*}&\rH^1(K_\Sigma/K,A_{\psi_E^{\cc}})\ar[r]\ar[d]^{u^*}&\bigoplus_{w\in\Sigma,w\nmid p}\rH^1(K_w,A_{\psi_E^{\cc}})\ar[d]^{v^*}\\
0\ar[r]&{\rm Sel}^{\{p\}}(K,A_{\psi_E^{\cc}}\otimes\Psi_{S}^{1-\cc})[S]\ar[r]&\rH^1(K_\Sigma/K,A_{\psi_E^{\cc}}\otimes\Psi_{S}^{1-\cc})[S]\ar[r]&\bigoplus_{w\in\Sigma,w\nmid p}\rH^1(K_w,A_{\psi_E^{\cc}}\otimes\Psi_{S}^{1-\cc})[S].\nonumber
}}
\end{equation}
The vanishing of $\rH^0(K_{\infty,\ppbar},E[\ppbar^\infty])$ implies that $u^*$ is an isomorphism, and as shown in the proof of \cite[Lem.\,IV.3.5]{deshalit} as a consequence of \cite[Cor.\,4.4]{mazur-18}, the map $v^*$ is injective. 
%

Therefore, by the Snake Lemma applied to the above two diagrams 
we conclude that ${\rm ker}(r^*)={\rm ker}(s^*)=0$ and that ${\rm coker}(r^*)={\rm coker}(s^*)={\rm ker}(v^*)=0$. This gives the result for $r^*$, and the case of $r$ is shown in the same manner.
\end{proof}



Finally, for the proof of our second main result we shall use the following relation 
between the rank of the ``reversed'' Selmer group for $\psi_E$ and that of its usual Selmer group.

\begin{lem}\label{lem:-1}
We have
\[
{\rm rank}_{\cO_{K,\pp}}{\rm Sel}_{0,\emptyset}(K,T_{\psi_E})=\begin{cases}
{\rm rank}_{\cO_{K,\pp}}{\rm Sel}(K,T_{\psi_E})-1 &\textrm{if ${\rm res}_p\neq 0$,}\\[0.2em]
{\rm rank}_{\cO_{K,\pp}}{\rm Sel}(K,T_{\psi_E})+1&\textrm{if ${\rm res}_p=0$,}
\end{cases}
\]
where ${\rm res}_p:{\rm Sel}(\Q,V_pE)\rightarrow E(\Q_p)\hat{\otimes}\Q_p$ the restriction map at $p$.
\end{lem} 

\begin{proof}
By global duality we have the exact sequence
\begin{equation}\label{eq:PT-bisi}
0\rightarrow{\rm Sel}(K,T_{\psi_E})\rightarrow{\rm Sel}_{\rm rel}(K,T_{\psi_E})\xrightarrow{\alpha}\prod_{v\vert p}\frac{\rH^1(K_v,T_{\psi_E})}{E(K_v)\otimes\cO_{K,\pp}}\xrightarrow{\beta^\vee}{\rm Sel}(K,A_{\psi_E^\cc})^\vee,
\end{equation}
where the last arrow corresponds (by Tate's local duality) to the Pontryagin dual of the restriction map 
\[
\beta:{\rm Sel}(K,A_{\psi_E^{\cc}})\rightarrow\prod_{v\vert p}E(K_v)\otimes(K_{\ppbar}/\cO_{K,\ppbar}).
\]
Note that the target of the map $\beta$ has $\cO_{K,\pp}$-rank one, and from the action of complex conjugation and the isomorphism ${\rm Sel}(K,A_{\psi_E})\simeq{\rm Sel}_{p^\infty}(E/\bQ)$ we see that ${\rm rank}_{\cO_{K,\pp}}{\rm im}(\beta)={\rm dim}_{\bQ_p}{\rm im}({\rm res}_p)$.

Suppose first that ${\rm res}_p\neq 0$, so by the above remarks the map $\beta$ has finite cokernel. By (\ref{eq:PT-bisi}), it follows that $\alpha$ has finite image, and therefore 
\[
{\rm rank}_{\cO_{K,\pp}}{\rm Sel}_{\rm rel}(K,T_{\psi_E})={\rm rank}_{\cO_{K,\pp}}{\rm Sel}(K,T_{\psi_E}).
\]
In particular, this implies that
\[
{\rm rank}_{\cO_{K,\pp}}{\rm Sel}_{0,\emptyset}(K,T_{\psi_E})=
{\rm rank}_{\cO_{K,\pp}}\ker\bigl({\rm res}_\pp:{\rm Sel}(K,T_{\psi_E})\rightarrow E(K_\pp)\otimes\cO_{K,\pp}\bigr),
\]
yielding the result in this case. On the other hand, if ${\rm res}_p=0$ then from \eqref{eq:PT-bisi} it follows that 
\[
{\rm rank}_{\cO_{K,\pp}}{\rm Sel}_{\rm rel}(K,T_{\psi_E})={\rm rank}_{\cO_{K,\pp}}{\rm Sel}(K,T_{\psi_E})+1.
\]
Since ${\rH}^1(K_\pp,T_{\psi_E})/E(K_\pp)\otimes\cO_{K,\pp}$ is torsion, the modules ${\rm Sel}_{\rm rel}(K,T_{\psi_E})$ and ${\rm Sel}_{0,\emptyset}(K,T_{\psi_E})$ have the same rank, so this concludes the proof.
\end{proof}

\subsection{Proof of Theorem~\ref{thmintro:B}}

We now prove our main result on the nonvanishing of generalised Kato classes $\kappa_p\in{\rm Sel}(\bQ,V_pE)$ in situations where ${\rm ord}_{s=1}L(E,s)\geq 2$. 

Recall that we let $E/\bQ$ be an elliptic curve with CM by an imaginary quadratic field $K$ in which the prime $p\geq 5$ splits as $(p)=\pp\ppbar$. Suppose $E$ has root number $+1$, and let $\kappa_p\in\rH^1(\Q,V_pE)$ be the generalised Kato class attached to a pair of ring class characters $\phi_1=\lambda_1^{1-\cc},\phi_2=\lambda_2^{1-\cc}$ as in Proposition~\ref{prop:nonvanishing}; then, by Lemma~\ref{lem:ERL}, $\kappa_p\in{\rm Sel}(\Q,V_pE)$ as long as $L(E,1)=0$. 

\begin{thm}
\label{thm:B}
%
Suppose $L(E,s)$ vanishes to positive even order at $s=1$. 
%
%
Then 
\[
\kappa_p\neq 0\quad\Longrightarrow\quad{\rm dim}_{\Q_p}{\rm Sel}(\Q,V_pE)=2.
\]
Conversely, if ${\rm dim}_{\Q_p}{\rm Sel}(\Q,V_pE)=2$ then $\kappa_p\neq 0$ if and only if the restriction map
\[
{\rm res}_p:{\rm Sel}(\bQ,V_pE)\rightarrow E(\bQ_p)\hat\otimes\bQ_p
\]
is nonzero. 
\end{thm}

\begin{proof}
Put $S=S_1$. Restricted to $S_1=S_2$, 
the factorisation in Proposition~\ref{prop:factor-L} (in which then $W_1=S$ and $W_2=0$) reads as the equality
\begin{equation}\label{eq:factor-L}
\mathscr{L}_p^\varphi(\varphi,\underline{\bfg\bfh})^2(S)=\cL^-_{\pp,\psi_E\phi_1\phi_2}(S)^\iota\cdot\cL^-_{\pp,\psi_E}(S)\cdot\cL_{\pp,\psi_E\phi_1}^-(0)\cdot\cL_{\pp,\psi_E\phi_2}^-(0)\nonumber
\end{equation}
up to a multiplication by a unit $u\in\cW[1/p]^\times$. On the other hand, the decomposition in Corollary~\ref{cor:factor-S} for the unbalanced Selmer group becomes
\begin{align*}
X^{\unb}(\Q,\Adags)&\simeq
X_{0,\emptyset}(K,A_{\psi_E^\cc\phi_1^\cc\phi_2^\cc}\otimes\Psi_{S}^{\cc-1})\oplus X_{0,\emptyset}(K,A_{\psi_E^\cc}\otimes\Psi_{S}^{1-\cc})\\
&\quad\oplus X_{0,\emptyset}(K,A_{\psi_E^\cc\phi_1^\cc}\otimes\Psi_{S}^{\cc-1})_{/S}\oplus X_{0,\emptyset}(K,A_{\psi_E^\cc\phi_2^\cc}\otimes\Psi_{S}^{1-\cc})_{/S}.
\end{align*}
Noting that the involution $\iota$ gives $X_{0,\emptyset}(K,A_{\psi_E^\cc\phi_1^\cc\phi_2^\cc}\otimes\Psi_{S}^{1-\cc})^\iota=X_{0,\emptyset}(K,A_{\psi_E^\cc\phi_1^\cc\phi_2^\cc}\otimes\Psi_{S}^{\cc-1})$, from Theorem~\ref{thm:AH} we deduce that $X^{\unb}(\Q,\Adags)$ is $\Lambda$-torsion with 
\[
{\rm char}_{\Lambda}\bigl(X^{\unb}(\Q,\Adags)\bigr)=\bigl(\mathscr{L}_p^\varphi(\varphi,\underline{\bfg\bfh})\bigr)^2
\]
as ideals in $\Lambda_\cW\otimes\Q_p$. By Proposition~\ref{prop:equiv}, it follows that 
\begin{equation}\label{eq:rank-bal}
{\rm rank}_\Lambda\bigl(X^{\rm bal}(\Q,\Adags)\big)={\rm rank}_\Lambda\bigl({\rm Sel}^{\rm bal}(\Q,\Vdags)\bigr)=1,
\end{equation}
and
\begin{equation}\label{eq:char-bal}
{\rm char}_{\Lambda}\bigl(X^{\rm bal}(\Q,\Adags)_{\rm tors}\bigr)={\rm char}_{\Lambda}\biggl(\frac{{\rm Sel}^{\rm bal}(\Q,\Vdags)}{\Lambda\cdot\kappa(\varphi,\underline{\bfg\bfh})}\biggr)^2
\end{equation}
as ideals in $\Lambda_\cW\otimes\Q_p$. Now, the implications
\begin{align*}
L(\psi_E^{-1}\phi_1^{-1}\phi_2^{-1},0)\neq 0\quad&\Longrightarrow\quad\cL_{\pp,\psi_E\phi_1\phi_2}^-(0)\neq 0\\
&\Longrightarrow\quad\left\vert X_{0,\emptyset}(K,A_{\psi_E^\cc\phi_1^\cc\phi_2^\cc}\otimes\Psi_{S}^{\cc-1})_{/S}\right\vert<\infty.
\end{align*}
follow from the interpolation property of $\mathcal{L}_{\pp,\psi_E\phi_1\phi_2}^-$ 
and the combination of Theorem~\ref{thm:AH} and Mazur's control theorem, 
respectively. The nonvanishing of $L(\psi_E^{-1}\phi_1^{-1},0)$ and $L(\psi_E^{-1}\phi_2^{-1},0)$ similarly implies
\[
\left\vert X_{0,\emptyset}(K,A_{\phi_E^\cc\phi_1^\cc}\otimes\Psi_{S}^{\cc-1})_{/S}\right\vert<\infty,\quad\left\vert X_{0,\emptyset}(K,A_{\phi_E^\cc\phi_2^\cc}\otimes\Psi_{S}^{\cc-1})_{/S}\right\vert<\infty,
\]
and so from  \eqref{eq:rank-bal} and the balanced Selmer group decomposition from Corollary~\ref{cor:factor-S}:  
\begin{align*}
X^{\rm bal}(\Q,\Adags)&\simeq
X_{0,\emptyset}(K,A_{\psi_E^\cc\phi_1^\cc\phi_2^\cc}\otimes\Psi_{S}^{\cc-1})\oplus X_{\emptyset,0}(K,A_{\psi_E^\cc}\otimes\Psi_{S}^{1-\cc})\\
&\quad\oplus X_{0,\emptyset}(K,A_{\psi_E^\cc\phi_1^\cc}\otimes\Psi_{S}^{\cc-1})_{/S}\oplus X_{0,\emptyset}(K,A_{\psi_E^\cc\phi_2^\cc}\otimes\Psi_{S}^{1-\cc})_{/S},
\end{align*}
we deduce that
\begin{equation}\label{eq:reversed-rank1}
{\rm rank}_\Lambda\bigl(X_{\emptyset,0}(K,A_{\psi_E^\cc}\otimes\Psi_{S}^{1-\cc})\bigr)=1.
\end{equation}
On the other hand, by the control theorem of Proposition~\ref{prop:control} the $\cO_{K,\pp}$-rank of the specialisation $X_{\emptyset,0}(\Q,A_{\psi_E^\cc}\otimes\Psi_{S}^{1-\cc})_{/S}$  satisfies
\begin{equation}\label{eq:coinv}
\begin{aligned}
{\rm rank}_{\cO_{K,\pp}}\bigl(X_{\emptyset,0}(K,A_{\psi_E^\cc}\otimes\Psi_{S}^{1-\cc})_{/S}\bigr)
&={\rm corank}_{\cO_{K,\pp}}\bigl({\rm Sel}_{\emptyset,0}(K,A_{\psi_E^\cc})\bigr)\\
&={\rm rank}_{\cO_{K,\pp}}\bigl({\rm Sel}_{\emptyset,0}(K,T_{\psi_E^{\cc}})\bigr)\\
&={\rm rank}_{\cO_{K,\pp}}\bigl({\rm Sel}_{0,\emptyset}(K,T_{\psi_E})\bigr),
\end{aligned}
\end{equation}
using the isomorphism ${\rm Sel}_{\emptyset,0}(K,T_{\psi_E^\cc})\simeq{\rm Sel}_{0,\emptyset}(K,T_{\psi_E})$ given by the action of complex conjugation for the last equality.

Denote by $\kappa_E(\varphi,\underline{\bfg\bfh})\in{\rm Sel}_{0,\emptyset}(K,T_{\psi_E}\otimes\Psi_{S}^{\cc-1})$ the projection of $\kappa(\varphi,\underline{\bfg\bfh})$ onto the second direct summand in the balanced Selmer groups decomposition from Proposition~\ref{prop:factor-S}:
\begin{align*}
{\rm Sel}^{\rm bal}(\Q,\Vdags)&\simeq
{\rm Sel}_{\emptyset,0}(K,T_{\psi_E\phi_1\phi_2}\otimes\Psi_{S}^{1-\cc})\oplus{\rm Sel}_{0,\emptyset}(K,T_{\psi_E}\otimes\Psi_{S}^{\cc-1})\\
&\quad
\oplus{\rm Sel}_{\emptyset,0}(K,T_{\psi_E\phi_1}\otimes\Psi_{S}^{1-\cc})_{/S}\oplus{\rm Sel}_{\emptyset,0}(K,T_{\psi_E\phi_2}\otimes\Psi_{S}^{\cc-1})_{/S}.
\end{align*}
Since our assumptions imply that $\psi_E\phi_1,\psi_E\phi_2,\psi_E\phi_1\phi_2$ have all root number $+1$, by Theorem\,\ref{thm:AH} we have
\[
\frac{{\rm Sel}^{\rm bal}(\Q,\Vdags)}{\Lambda\cdot\kappa(\varphi,\underline{\bfg\bfh})}\simeq\frac{{\rm Sel}_{0,\emptyset}(K,T_{\psi_E}\otimes\Psi_{S}^{\cc-1})}{\Lambda\cdot\kappa_E(\varphi,\underline{\bfg\bfh})}.
\]
Together with Theorem~\ref{thm:A} (note that the choice of $\phi_1,\phi_2$ in Proposition~\ref{prop:nonvanishing} guarantee that the associated primtitive CM Hida families $\bfg,\bfh$ satisfy the conditions in that result), this gives that ${\rm Sel}_{0,\emptyset}(K,T_{\psi_E}\otimes\Psi_{S}^{\cc-1})$ has $\Lambda$-rank one, and we have the following equality of ideals in $\Lambda_\cW\otimes\Q_p$:
\begin{equation}\label{eq:IMC-lambda}
{\rm char}_\Lambda\bigl(X_{\emptyset,0}(K,A_{\psi_E^\cc}\otimes\Psi_S^{1-\cc})_{\rm tors}\bigr)\cdot\mathcal{L}_{\pp,\psi_E^\cc\phi_1^\cc\phi_2^\cc}^-(S)^\iota={\rm char}_\Lambda\bigl(\mathfrak{Z}_E\bigr)^2,
\end{equation}
where $\mathfrak{Z}_E={\rm Sel}_{0,\emptyset}(K,T_{\psi_E}\otimes\Psi_{S}^{\cc-1})/\Lambda\cdot\kappa_E(\varphi,\underline{\bfg\bfh})$. From  (\ref{eq:reversed-rank1}), (\ref{eq:coinv}), and (\ref{eq:IMC-lambda}), we thus see that
\begin{align*}
{\rm rank}_{\cO_{K,\pp}}\bigl({\rm Sel}_{0,\emptyset}(K,T_{\psi_E})\bigr)&=1-{\rm ord}_S\bigl(\mathcal{L}_{\pp,\psi_E^\cc\phi_1^\cc\phi_2^\cc}^-(S)^\iota\bigr)+2\,{\rm rank}_{\cO_{K,\pp}}\bigl((\mathfrak{Z}_E)_{/S}\bigr)\\
&=1+2\,{\rm rank}_{\cO_{K,\pp}}\bigl((\mathfrak{Z}_E)_{/S}\bigr),
\end{align*}
using Theorem\,\ref{thm:katz} and the nonvanishing of $L(\psi_E^{-1}\phi_1^{-1}\phi_2^{-1},0)$ for the last equality. 

Since by construction the injection
\[
{\rm Sel}_{0,\emptyset}(K,T_{\psi_E}\otimes\Psi_{S}^{\cc-1})_{/S}\rightarrow{\rm Sel}_{0,\emptyset}(K,T_{\psi_E})
\]
of Proposition~\ref{prop:control} sends 
$\kappa_E(\varphi,\underline{\bfg\bfh})\;{\rm mod}\;S$ into $\kappa_p$, we conclude that
\begin{equation}\label{eq:str=1}
{\rm rank}_{\cO_{K,\pp}}\bigl({\rm Sel}_{0,\emptyset}(K,T_{\psi_E})\bigr)=1\quad\Longleftrightarrow\quad\kappa_p\neq 0.
\end{equation}
The first claim in the Theorem now follows from \eqref{eq:str=1} and Lemma\,\ref{lem:-1}, noting that by the work of Rubin \cite{rubin-IMC} proving the Iwasawa main conjecture for $K$, the vanishing of $L(E,1)$ implies the non-triviality of ${\rm Sel}(\Q,V_pE)$. Similarly, the last claim in the Theorem is a direct consequence of \eqref{eq:str=1} and Lemma\,\ref{lem:-1}.  
\end{proof}


\begin{rem}\label{eq:res-0}
The result of Theorem~\ref{thm:B} confirms expectations suggested by the conjectures of Darmon--Rotger \cite{DR2.5} (see esp. Conjecture~3.12 in \emph{op.\,cit.} as specialised to the ``rank $(2,0)$ setting'' in $\S{4.5.3}$), and further shows that the nonvanishing of the restriction map ${\rm res}_p$ is \emph{necessary} for the implication ${\rm dim}_{\Q_p}{\rm Sel}(\bQ,V_pE)=2\Longrightarrow\kappa_p\neq 0$ to hold. 
\end{rem}

\bibliographystyle{amsalpha}
\bibliography{Kato-Schoen-refs}

\end{document}